\newcommand{\nc}{\newcommand}
\nc{\les}{\lesssim}
\nc{\nit}{\noindent}
\nc{\nn}{\nonumber}
\nc{\D}{\partial}
\nc{\diff}[2]{\frac{d #1}{d #2}}
\nc{\diffn}[3]{\frac{d^{#3} #1}{d {#2}^{#3}}}
\nc{\pdiff}[2]{\frac{\partial #1}{\partial #2}}
\nc{\pdiffn}[3]{\frac{\partial^{#3} #1}{\partial{#2}^{#3}}}
\nc{\abs}[1] {\lvert #1 \rvert}
\nc{\cAc}{{\cal A}_c}
\nc{\cE}{{\cal E}}
\nc{\cF}{{\cal F}}
\nc{\cP}{{\cal P}}
\nc{\cV}{{\cal V}}
\nc{\cQ}{{\cal Q}}
\nc{\cGin}{{\cal G}_{\rm in}}
\nc{\cGout}{{\cal G}_{\rm out}}
\nc{\cO}{{\cal O}}
\nc{\Lav}{{\cal L}_{\rm av}}
\nc{\cL}{{\cal L}}
\nc{\cB}{{\cal B}}
\nc{\cZ}{{\cal Z}}
\nc{\mR}{{\mathcal R}}
\nc{\mG}{{\mathcal G}}
\nc{\cT}{{\cal T}}
\nc{\cY}{{\cal Y}}
\nc{\cX}{{\cal X}}
\nc{\cXT}{{{\cal X}(T)}}
\nc{\cBT}{{{\cal B}(T)}}
\nc{\vD}{{\vec \mathcal{D}}}
\nc{\efield}{\mathcal{E}}
\nc{\vE}{{\vec \efield}}
\nc{\vB}{{\vec \mathcal{B}}}
\nc{\vH}{{\vec \mathcal{H}}}
\nc{\F}{  \mathcal{F} }
\nc{\ty}{{\tilde y}}
\nc{\tu}{{\tilde u}}
\nc{\tV}{{\tilde V}}
\nc{\Pc}{{\bf P_c}}
\nc{\bx}{{\bf x}}
\nc{\bX}{{\bf X}}
\nc{\bXYZ}{{\bf XYZ}}
\nc{\bY}{{\bf Y}}
\nc{\bF}{{\bf F}}
\nc{\bS}{{\bf S}}
\nc{\dV}{{\delta V}}
\nc{\dE}{{\delta E}}
\nc{\TT}{{\Theta}}
\nc{\dPsi}{{\delta\Psi}}
\nc{\order}{{\cal O}}
\nc{\Rout}{R_{\rm out}}
\nc{\eplus}{e_+}
\nc{\eminus}{e_-}
\nc{\epm}{e_\pm}
\nc{\eps}{\varepsilon}
\nc{\vnabla}{{\vec\nabla}}
\nc{\G}{\Gamma}
\nc{\w}{\omega}
\nc{\mh}{h}
\nc{\mg}{g}
\nc{\vphi}{\varphi}
\nc{\tlambda}{\tilde\lambda}
\nc{\be}{\begin{equation}}
\nc{\ee}{\end{equation}}
\nc{\ba}{\begin{eqnarray}}
\nc{\ea}{\end{eqnarray}}
\nc{\g}{\gamma}
\nc{\ol}{\overline}
\newtheorem{theorem}{Theorem}[section]
\newtheorem{lemma}[theorem]{Lemma}
\newtheorem{prop}[theorem]{Proposition}
\newtheorem{corollary}[theorem]{Corollary}
\newtheorem{rmk}[theorem]{Remark}
\nc{\pT}{\partial_T}
\nc{\pz}{\partial_z}
\nc{\pt}{\partial_t}
\nc{\la}{\langle}
\nc{\ra}{\rangle}
\nc{\infint}{\int_{-\infty}^{\infty}}
\nc{\halfwidth}{6.5cm}
\nc{\figwidth}{10cm}
\newcommand{\f}{\frac}
\nc{\nlayers}{L} \nc{\nsectors}{M}
\nc{\indicator}{\mathbf{1}}
\nc{\chile}{R_{\rm hole}}
\nc{\Rring}{R_{\rm ring}}
\nc{\neff}{n_{\rm eff}}
\nc{\Frem}{F_{\rm rem}}
\nc{\R}{\mathbb R}
\nc{\C}{\mathbb C}
\nc{\Z}{\mathbb Z}
\nc{\DD}{\Delta}
\nc{\cD}{\mathcal D}
\nc{\lnorm}{\left\|}
\nc{\rnorm}{\right\|}
\nc{\rnormp}{\right\|_{\ell^{p,\eps}}}
\nc{\rar}{\rightarrow}
\begin{document}

\begin{abstract}
We study the scattering poles of Einstein–Schrödinger operator  $\sqrt{-\Delta} + V$, where $V$ is compactly supported, bounded and complex valued potential. We show that the resolvent operator $ \chi R_V \chi$ has meromorphic continuation to the whole Riemannian surface of $\Lambda$ of $ \log z $ as an operator  $L^2 \rightarrow L^2 $. We then obtain the upper bound on the counting function $N(r,a)= \# \{ z_j \in \Lambda: 0 \leq  |z_j| \leq r, |\arg z_j| \leq a \}$, $r >1$, $ |a| >1$  as $C   \la a \ra  ( \la r \ra^{d} + (\log \la a \ra)^d) $, where $z_j$ are the poles of $ \chi R_V(\lambda) \chi$. 
	
\end{abstract}
\title[Scattering poles of half-Laplacian]{Bounds on the number of scattering poles of half-Laplacian in odd dimensions, $d\geq 3$ }

\author[ Ebru Toprak]{ Ebru Toprak}


\maketitle

\section{Introduction}

  Scattering resonances are known to be generalization of the eigenvalues or bound states for systems in which energy can scatter to infinity, and they  are related to the poles of the meromorphic continuation of the resolvent operator. More precisely, let $V \in L^{\infty}(\mathbb{C}^d)$ be a compactly supported potential and let $R_H (\lambda) := (-\Delta +V -\lambda^2)^{-1}$ for $ \Im \lambda >0$, then it is known that $R_H (\lambda)$  is a bounded operator on $L^2 $, except for a finite number of values of $\lambda$. If $\chi$ is a compactly supported cut-off function such that $ \chi V =V$, then $ \chi R_H(\lambda) \chi$ has a meromorphic continuation to the lower half plane when $d$ is odd; and to $\Lambda$, the logarithmic cover of the complex plane, when d is even. The poles of  $ \chi R_H(\lambda) \chi$ are called resonances, and are independent of the choice of the cut-off function $\chi$. For a very detailed explanation of the scattering resonances we refer the reader to e.g. \cite{DZ}.

  The upper bounds on the number of  poles of $ \chi R_H(\lambda) \chi$ have been of great interest to many authors, see \cite{DZ} and references therein.  In fact, they have been studied in every dimension for a large class of compactly supported perturbations. Let $\{z_j\}$ be the poles of $ \chi R_H(\lambda) \chi$ repeated according to multiplicity and  let $ N(r) = \# \{ z_j : |z_j| \leq r \}$ when $d \geq3$ is odd. Then one has $N(r) \leq C_1 r^d$, see \cite{ SZ,Vod2}.
  
   The formulation for the counting function in even dimensions is more delicate as $R_H (\lambda)$ exhibits logarithmic singularity around zero.  One well known formulation considers $N(r,a)= \# \{ z_j \in \Lambda: 0 < |z_j| \leq r, |\arg z_j | \leq a\}$, $r,a >1$.  In \cite{VD}, Vodev showed that $N(r,a) \leq C a( r^d +( \log a)^d) $. Another counting function is considered by Intissar in \cite{Int} as $\tilde{N}(r) = \# \{ \lambda_j: r^{-\epsilon} \leq |\lambda_j| \leq \epsilon \log r\}$, $r>1$, $ \epsilon \in ( 0, 2^{-1/2})$ with the bound $ \tilde{N}(r)  \leq C r^{d+1}$. For the estimate in  dimension two one can see \cite{Vodtwo}.

  In this note, we study the scattering poles of  $H :=  \sqrt{-\Delta} + V $, where $V$ is bounded, compactly supported, and  complex valued. Recall that for $ s>0$, $ (-\Delta)^{s/2}$ is defined through the Fourier symbol $|\xi|^{s}$,  it is self-adjoint in $L^2(\R^n)$, with domain $H^{s}$ and it has absolutely continuous spectrum in $[0,\infty)$. It also has the following representation for any $ 0<s<2$ 
  \begin{align} \label{fracdef}
  (-\Delta)^{s/2} \psi(x) = \frac{2^s \Gamma( \frac{d+s}{2})}{ \pi^{d/2} |\Gamma( -\frac{s}{2})|} \text{p.v.} \int_{\R^d} \frac{\psi(x)- \psi(y)}{ |x-y|^{d+s}} dy. 
   \end{align}
  
 Our motivation is the recent paper \cite{ZHZ} on the wave operator related to $H$ . Let $W{\pm} = s-\lim_{t\to \pm \infty} e^{itH} e^{-it \sqrt{-\Delta}}$ be the aforementioned wave operator. It was shown in \cite{ZHZ} that $W{\pm} $ exist and is complete for short range potentials. Moreover, the scattering matrix $S := (W^{+})^* W_-$ is unitary. In Appendix~A, we also compute the scattering matrix for $H$ in odd dimensions when  $d \geq 3$, and  $V$ is real and compactly supported, similarly what is computed for the Schr\"odinger operator. In fact, using \eqref{fracdef} one can see that all solutions to  $ (\sqrt{-\Delta} - \lambda )  \psi_0=0$ for  $ \lambda > 0$  are the superposition of the elementary plane wave solutions $e^{ \pm i \lambda \la x , \omega \ra }$  where $ x \in \R^d$, and $ \omega \in \mathbb{S}^{d-1}$. Therefore, the absolute scattering matrix corresponding to $\sqrt{-\Delta}$  arises similar to the one corresponding to $-\Delta$. 
  
 It has to be mentioned that in this paper we do not answer the question whether or not $(H -  \lambda )u=0$ for $ \lambda >0 $ has outgoing solutions. In fact, to compute the scattering matrix we assume the absence of positive eigenvalues for $H$. It is not hard to see in a similar way to Agmon's bootstrapping argument  that under some decaying assumptions on $V$ the eigenfunctions of $H$ corresponding to the positive eigenvalues decay faster than any polynomial. On the other hand, there are very few papers which study the absence of embedded eigenvalues for $H$ and they either assume negativity or smallness on $V$, \cite{LS,RU}. This issue will be addressed in elsewhere.

   In this paper, we define  $R_0(z) := ( \sqrt{-\Delta} -z)^{-1}$, and $R_V(z) := ( \sqrt{-\Delta} +V - z)^{-1}$ for $\Im z > 0$, $\Re z> 0$. In Section~\ref{mcont}, we compute an expansion for $R_0(z)$  in $d \geq 3$ when d is odd. To the best of our knowledge such expansion did not appear in elsewhere.  As the expansion of  $R_0(z)$ displays logarithmic behaviour, the meromorphic  extension of $ \chi R_V(z) \chi $ is to $ \Lambda$. This also suggests to  study the poles of  $ \chi R_V(z) \chi $ in $N(r,a)$ as they are studied in  \cite{VD}. Our main theorem is the following
   
\begin{theorem}\label{the:main} Let $ V(x)$ be bounded, compactly supported, and  complex valued  function, then the counting function $N(r,a)$ of the scattering poles associated to the operator $ \sqrt{-\Delta} + V$ satisfies the bound 
$$ N(r,a) \leq C \la a \ra  ( \la r \ra^{d} + (\log \la a \ra)^d )  $$ 
where  $|a| \geq \frac{\pi}{4}$, $r \geq 1$. 
\end{theorem}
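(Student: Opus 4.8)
The plan is to follow the now-classical strategy of Zworski and Vodev for counting scattering poles via singular values of holomorphic Fredholm determinants, adapted to the logarithmic cover $\Lambda$ forced by the half-Laplacian. The starting point is the Birman--Schwinger-type factorization: on $\Im z>0$ one writes $\chi R_V(z)\chi = \chi R_0(z)\chi - \chi R_0(z) V (I + \chi R_0(z) V)^{-1}\chi R_0(z)\chi$, so that the poles of $\chi R_V(z)\chi$ on $\Lambda$ coincide (with multiplicity) with the zeros of the holomorphic function $D(z) := \det\bigl(I + K(z)\bigr)$, where $K(z) := \chi R_0(z) V$ is trace class because $R_0(z)$ maps into $H^1_{\mathrm{loc}}$ and $V$ is compactly supported. (Here one should take an appropriate power or regularized determinant $\det_{p}$ with $p$ chosen depending on $d$ so that $K(z)^{p}$ is trace class; in odd $d\geq 3$, $p = \lceil d/2\rceil$ or similar will do, exactly as in the Schr\"odinger case.) The meromorphic continuation to $\Lambda$, which I may assume from Section~\ref{mcont}, guarantees $D(z)$ is holomorphic on all of $\Lambda$ and its zeros are precisely the resonances $z_j$.

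Next I would reduce the counting of zeros to an upper bound on $\log|D(z)|$ via a Jensen-type inequality. Fix the sector $\{|\arg z|\le a\}$; cover the region $\{|z|\le r,\ |\arg z|\le a\}$ by $O(\la a\ra)$ unit-angular-width "fundamental" sectors, in each of which $R_0(z)$ behaves like a single-sheeted Schr\"odinger-type resolvent up to a bounded twist. On each such sector, apply Jensen's formula (or the standard variant for counting zeros in a disk/sector in terms of $\sup\log|D|$ minus a lower bound at one point): the number of zeros of $D$ in a disk of radius $\rho$ is $\lesssim \sup_{|z-z_0|\le 2\rho}\log|D(z)| - \log|D(z_0)|$ for a suitable base point $z_0$ where a non-degeneracy/lower bound holds. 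Summing over the $O(\la a\ra)$ sectors and over a dyadic decomposition in $|z|$ yields $N(r,a) \lesssim \la a\ra \cdot \sup \log|D|$ over the relevant region.

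The analytic heart is therefore the upper bound
$$ \log|D(z)| \;\le\; C\,\bigl(\la z\ra^{d} + (\log\la z\ra)^{d}\bigr), \qquad |\arg z|\le a, $$
uniformly, together with the non-triviality lower bound at base points. For the upper bound one uses $\log|\det(I+K)| \le \|K\|_{\mathcal{S}_1}$ (or the $\det_p$ analogue with $\|K^p\|_{\mathcal{S}_1}$), so everything comes down to estimating the trace-norm (equivalently, the singular values) of $\chi R_0(z)\chi$ and its powers on $\Lambda$. This is where the explicit expansion of $R_0(z)$ from Section~\ref{mcont} is essential: in odd $d\ge 3$ it has a free-resolvent-like piece that grows like $e^{C|z|\,\mathrm{dist}(\cdot,\cdot)}$ controlled on $\mathrm{supp}\,\chi$ by $e^{C|\Im z|}\le e^{C|z|}$, contributing the $\la r\ra^{d}$ (the power $d$ coming from the number of eigenvalues of the cut-off free resolvent below a given threshold, i.e. a Weyl-law count $\#\{n: \mu_n(\chi R_0\chi)\ge t\}\lesssim (\text{scale})^d$), plus logarithmic terms in $z$ produced by the $\log z$ factors in the expansion — these are exactly what generate the $(\log\la a\ra)^d$ term once $z$ is allowed to wander through $\la a\ra$ sheets of $\Lambda$ and $\log z$ accordingly picks up a summand of size $\sim a$.

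I expect the main obstacle to be making the trace-class/singular-value bounds for $\chi R_0(z)\chi$ genuinely uniform across all of $\Lambda$, i.e. controlling the contribution of the $\log z$ (and $(\log z)^k$, $k\le d$) terms in the expansion of $R_0(z)$: on the physical sheet these are harmless, but on distant sheets $\log z$ grows linearly in $\arg z$, and one must verify that each such logarithmic term, after multiplication by $\chi$ and $V$, is trace class with trace norm growing no faster than a power of $\log\la a\ra$ times the Weyl count — which is precisely the mechanism behind the $\la a\ra (\log\la a\ra)^d$ summand. A secondary but real difficulty is the lower bound on $|D(z)|$ at base points: one needs a minimum-modulus argument (Caratheodory, or a Cartan/Harnack-type estimate) showing $D$ cannot be uniformly tiny on a whole sector, using that $D(z)\to 1$ as $\Im z\to+\infty$ on the physical sheet and propagating this to the base points of the other sectors via the analytic structure of the expansion.
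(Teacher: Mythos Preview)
Your overall architecture---reduce the poles to zeros of a regularized Fredholm determinant of $K(z)=VR_0(z)\chi$, bound $\log|D|$ via singular-value estimates, and feed this into a Jensen-type count on $\Lambda$---is the paper's, and your instinct that the sheet-dependence of $\log z$ is the source of the $(\log\la a\ra)^d$ is correct. But the mechanism you propose for that term has a genuine gap.

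If you only use a Weyl-type bound $s_j(\chi R_0(z)\chi)\lesssim \la m\ra\, e^{C|z|}\, j^{-1/d}$ on sheet $m$ (which is what ``trace norm $\lesssim$ power of $\log\la a\ra$ times Weyl count'' would give at best), the factor $m$ enters \emph{multiplicatively} in every singular value. Passing to $(VR_0\chi)^{d+1}$ and summing yields $\log|D|\lesssim \la m\ra^{d+1} e^{C|z|}$, hence after Jensen $N(r,a)\lesssim a^{d+2}e^{Cr}$, which is far from the claim. The trace norm of the $m$-dependent part really does grow linearly in $m$; you cannot get $(\log\la a\ra)^d$ from a trace-norm bound alone. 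Note also the internal inconsistency in your sketch: you assert $\log z$ picks up a summand of size $\sim a$, yet the target is $(\log\la a\ra)^d$, not $a^d$; the passage from $m$ to $\log m$ is precisely what is missing.

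The paper's key structural input, absent from your proposal, is that for $z=e^{2\pi i m}z_0$ with $\arg z_0\in(0,2\pi]$ one has
\[
\chi R_0(z)\chi \;=\; \sum_{j=1}^{(d-1)/2} B_j(z_0)\;+\;c\,m\,z_0^{d-1}\,\mathcal{E}_\chi^*(\bar z_0)\,\mathcal{E}_\chi(z_0),
\]
where the $B_j$ are \emph{independent of $m$} with $\|B_j\|_{L^2\to H^j}\lesssim\la z_0\ra^{j-1}$, and the entire $m$-dependence sits in the second term, whose kernel factors through $L^2(\mathbb{S}^{d-1})$ and is real-analytic on the sphere. Vodev's trick---compare with $(-\Delta_{\mathbb{S}^{d-1}}+I)^{-\ell}$ and optimize over $\ell$---then gives the super-polynomial decay
\[
s_j\bigl(m\,z_0^{d-1}\mathcal{E}_\chi^*\mathcal{E}_\chi\bigr)\;\lesssim\;\exp\bigl(C(\la z_0\ra+\log\la m\ra)-c\,j^{1/(d-1)}\bigr).
\]
It is this decay (not a Weyl count) that forces the crossover at $j\sim(\la z_0\ra+\log\la m\ra)^{d-1}$ and produces $\log|D|\lesssim(\la z_0\ra+\log\la m\ra)^d$. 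The $\log\la m\ra$ rather than $m$ appears because the linear factor $m$ lives inside an exponential that is beaten by $e^{-cj^{1/(d-1)}}$ once $j$ exceeds a power of $\log m$.

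A secondary point: rather than tiling by $O(\la a\ra)$ unit-angle sectors and running separate Jensen/minimum-modulus arguments (which, as you note, multiplies the base-point lower-bound difficulties), the paper applies Vodev's counting theorem on $\Lambda$ directly, after normalizing $D$ by its leading $z^p$ or $z^p\log z$ factor at $z=0$ so that the quotient tends to $1$.
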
 

    We end this section with notations used throughout this paper.  We use $L^2_c$ to represent compactly supported $L^2$ functions and $ L^2_{\text{loc}}$ to represent the locally $L^2$ functions. We use $ L^{2, \sigma} := \{ f: \|\la x\ra^{\sigma}f \|_{L^2} < \infty\}$, $\mathcal{H}_{s}$ for Sobolev norm, and $ \mathcal{H}_{s, \sigma}:=\{ f: \|\la x\ra^{\sigma}f \|_{\mathcal{H}_s} < \infty\}$.

\section{Meromorphic continuation of $ \chi R_V \chi $}\label{mcont}
Let $V$ be bounded compactly supported, and complex valued function. Define the resolvent operator 
\begin{align}
R_0(z):=( \sqrt{-\Delta} - z)^{-1},  \,\,\,\,\,\, R_V(z):=( \sqrt{-\Delta} + V- z)^{-1} \,\,\,\,\,\,\text{for} \,\,\,\  \Im z>0, \,\,\, \Re z>0  . 
\end{align}
We represent the integral kernel of $R_0(z)$ as $R_0(z)(x,y)$. In this section we will prove that 
 \begin{prop} \label{prop:mer} Let $ \chi(x) \in C_c^{\infty}$ be a cutoff function such that $ \chi V = V$. Then 
 $ \chi R_V(z) \chi $ extends meromorphically to the whole Riemannian surface of $\Lambda$ of $\log z $ as an operator $ L^2 \rightarrow L^2$. The poles of the extension of $ \chi R_V \chi $ coincides with the poles of the extension of  $ ( I + \chi R_0 V)^{-1} $, and have finite multiplicity.
  \end{prop}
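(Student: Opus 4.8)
The plan is to reduce the statement to the analytic Fredholm theorem applied to the $B(L^2)$-valued map $z \mapsto I + \chi R_0(z) V$ on the connected complex manifold $\Lambda$, feeding in the explicit expansion of the free resolvent kernel established earlier in this section. First I would record the two algebraic identities coming from the second resolvent formula $R_V = R_0 - R_0 V R_V$. Setting $K(z) := \chi R_0(z) V$ and using $\chi V = V = V\chi$, sandwiching the resolvent formula between two copies of $\chi$ and inserting cut-offs yields, for $\Im z$ large,
\[
\bigl(I + K(z)\bigr)\,\chi R_V(z)\chi = \chi R_0(z)\chi, \qquad \bigl(I + K(z)\bigr)^{-1} = I - \chi R_V(z)\chi\,V ,
\]
the second being verified by direct multiplication after using $R_0 V R_V = R_0 - R_V$. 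Both sides of each identity are defined and agree for $\Im z$ large, so once the meromorphic continuations of $\chi R_0\chi$ and of $(I+K)^{-1}$ to $\Lambda$ are in hand, these identities transport the continuation to $\chi R_V\chi$ and pin down its poles.

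Next I would supply the analytic-Fredholm input, which rests entirely on the expansion of $R_0(z)(x,y)$ proved above. That expansion shows that $z \mapsto \chi R_0(z)\chi$ (equivalently $z \mapsto \chi R_0(z) V$, since $\chi V = V$) extends to a \emph{holomorphic} $B(L^2)$-valued function on all of $\Lambda$: the only singular dependence on $z$ is logarithmic, and $\log z$ is single-valued and holomorphic on the logarithmic cover. Moreover $K(z)$ is a compact operator for every $z \in \Lambda$: multiplication by $V$ sends $L^2$ into $L^2_c$, the continued $R_0(z)$ gains one derivative and maps $L^2_c$ into $H^1$ on compact sets, and multiplication by $\chi$ followed by the compact Rellich embedding $H^1(\mathrm{supp}\,\chi) \hookrightarrow L^2$ is compact. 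Finally, for $z = iy$ with $y \to +\infty$ one has $\|K(z)\|_{L^2 \to L^2} \le \|\chi\|_{\infty}\,\|V\|_{\infty}\,/\,\dist(z,[0,\infty)) \to 0$, so $I + K(z)$ is invertible there by Neumann series. The analytic Fredholm theorem then gives that $(I+K(z))^{-1}$ is meromorphic on $\Lambda$, holomorphic off a discrete subset, with finite-rank Laurent principal part at each pole.

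Combining these, I would \emph{define} the continuation of $\chi R_V(z)\chi$ on $\Lambda$ by $\chi R_V(z)\chi := (I+K(z))^{-1}\,\chi R_0(z)\chi$; it is meromorphic on $\Lambda$, agrees with the original operator for $\Im z$ large, has poles contained in those of $(I+K)^{-1}$, and inherits finite-rank principal parts, hence finite multiplicity. For the reverse inclusion, the identity $(I+K(z))^{-1} = I - \chi R_V(z)\chi\,V$, which holds for $\Im z$ large and therefore on all of $\Lambda$ by analytic continuation, shows that a pole of $(I+K)^{-1}$ at $z_0$ would force a pole of $(\chi R_V(z)\chi)\,V$ at $z_0$, hence of $\chi R_V\chi$ itself. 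Therefore the pole sets of $\chi R_V\chi$ and of $(I+\chi R_0 V)^{-1}$ coincide, as claimed.

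I expect the only genuine obstacle to be the analytic continuation of the free resolvent itself — that $z \mapsto \chi R_0(z)\chi$ extends holomorphically to the whole surface $\Lambda$, takes values in the compact operators, and has at worst logarithmic dependence on $z$ without spurious poles appearing on winding around $z = 0$. This is precisely the content of the kernel expansion derived earlier in Section~\ref{mcont}, which I would simply invoke; granting it, the cut-off $\chi$ together with the one-derivative smoothing of $R_0$ yields compactness, the large-$\Im z$ resolvent bound seeds the Fredholm argument, and the remainder is the standard bookkeeping above.
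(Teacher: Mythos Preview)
Your proposal is correct and follows essentially the same route as the paper: second resolvent identity plus Lemma~\ref{lem:mapping} for the holomorphic extension and compactness of the cut-off free resolvent, then analytic Fredholm theory seeded by a Neumann-series point. The only cosmetic difference is that the paper factors on the right, writing $\chi R_V\chi=\chi R_0\chi\,(I+VR_0\chi)^{-1}$, whereas you factor on the left with $K=\chi R_0 V$; your treatment of the reverse pole inclusion via $(I+K)^{-1}=I-\chi R_V\chi\,V$ is in fact more explicit than what the paper records.
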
 
 
 Before we prove Proposition~\ref{prop:mer}, we need to show that $ \chi R_0(z) \chi $  has an analytic extension.
 
  \begin{lemma} \label{lem:mapping} Let $d \geq 3$ be odd, and $\chi$ be a compactly supported cut-off function. $ \chi R_0(z) \chi$  extends analytically to the whole Riemannian surface of $\Lambda$ of $\log z $ as an operator $ L^2 \rightarrow L^2 $ and  one has 
\begin{align} \label{L2bound}
\| \chi R_0(z) \chi \|_{ L^2 \rightarrow L^2} \les  e^{|z|}. 
\end{align} 
\end{lemma}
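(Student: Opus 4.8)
The plan is to obtain an explicit formula for the integral kernel $R_0(z)(x,y)$ and then to read off both the analytic continuation and the exponential bound directly from it. Recall that $\sqrt{-\Delta}$ has the symbol $|\xi|$, so $R_0(z) = (|\xi| - z)^{-1}$ as a Fourier multiplier, which we may rewrite using the subordination-type identity $(|\xi|-z)^{-1} = (|\xi|+z)(|\xi|^2 - z^2)^{-1} = (|\xi|+z) R_{-\Delta}(z^2)$ in terms of the ordinary Schrödinger resolvent at energy $z^2$. Since $d$ is odd, the free resolvent $(-\Delta - z^2)^{-1}$ has a completely explicit kernel: a polynomial of degree $\frac{d-3}{2}$ in $z/|x-y|$ times $e^{iz|x-y|}/|x-y|$, which is entire in $z$. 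The extra factor $|\xi|$ acting on this produces, via $|\xi| = -\Delta \cdot |\xi|^{-1}$ or directly via the integral representation \eqref{fracdef} with $s=1$, a term involving $(-\Delta)^{1/2}$ applied to the explicit kernel; this is where the logarithm enters, because $(-\Delta)^{1/2}$ of a nice function in odd dimensions is not local and the Riesz-type kernel $|x-y|^{-d-1}$ regularization introduces a $\log z$. So the first real step is to carry out this computation carefully and exhibit $R_0(z)(x,y)$ as an explicit combination of an entire function of $z$ and a term of the form $(\text{entire in } z) \cdot \log z$, all multiplied by kernels that are locally integrable on compact sets and grow at worst like $e^{C|z| |x-y|}$.

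Once that expansion is in hand — and Section~\ref{mcont} promises exactly such an expansion — the analytic continuation to $\Lambda$ is automatic: the only multivalued ingredient is $\log z$, whose natural domain is precisely the logarithmic cover $\Lambda$, and every other ingredient is entire. For the norm bound \eqref{L2bound}, I would argue as follows. After inserting the cutoffs $\chi$, both $x$ and $y$ range over a fixed compact set $K = \mathrm{supp}\,\chi$, so $|x-y| \leq \mathrm{diam}(K) =: D$ is bounded. Therefore every exponential $e^{iz|x-y|}$ appearing in the kernel is bounded by $e^{|z| D}$, and the polynomial prefactors in $z$ and the $\log z$ factor are dominated by $e^{\epsilon |z|}$ for any $\epsilon>0$ once $|z|$ is large, and are bounded for $|z|$ small. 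Hence the kernel of $\chi R_0(z)\chi$ is bounded pointwise by $C e^{|z|} \cdot g(x,y)$ where $g$ is a fixed $z$-independent function that is locally integrable to the appropriate order on $K \times K$ — the only singularities are $|x-y|^{-1}$ from the Schrödinger kernel and the mild $|x-y|^{-d-1}$-type singularity regularized by the principal value, which in $d\geq 3$ odd still yields a kernel in, say, $L^2(K\times K)$ or at least one defining a bounded operator on $L^2(K)$. Estimating the operator norm by the Hilbert–Schmidt norm (or a Schur test when the singularity is borderline) then gives $\|\chi R_0(z)\chi\|_{L^2\to L^2} \les e^{|z|}$.

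The main obstacle, I expect, is controlling the singular behaviour of the kernel near the diagonal $x = y$ uniformly in $z$, and in particular making sure that the nonlocal piece coming from $(-\Delta)^{1/2}$ — which has the borderline singularity $|x-y|^{-d-1}$ before regularization — genuinely defines a bounded operator on $L^2(K)$ after the principal-value/finite-part subtraction in \eqref{fracdef}. The cleanest route is probably to split $R_0(z)(x,y)$ into a ``near-diagonal'' part, treated by the $(-\Delta)^{1/2}$-boundedness on $\mathcal H_1 \hookleftarrow$ compactly supported functions together with the $\mathcal H_1$-mapping properties of the ordinary resolvent, and a ``far-from-diagonal'' smooth part where the kernel is genuinely bounded and the $e^{|z|}$ factor is read off trivially. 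A secondary, more bookkeeping-type obstacle is tracking the exact power of $z$ in the polynomial prefactors to confirm they are absorbed into $e^{|z|}$ rather than forcing $|z|^{N} e^{|z|}$; but since $e^{|z|}$ already dominates any polynomial, even the crude bound $|z|^N e^{|z|} \les e^{2|z|}$ suffices if one is willing to replace $e^{|z|}$ by $e^{C|z|}$, which is all that \eqref{L2bound} (with its $\les$) requires. Finally, to go from ``kernel bound'' to ``operator bound'' rigorously for all of $\Lambda$ rather than just the physical sheet, one notes that the kernel formula, once derived on $\{\Im z>0,\ \Re z>0\}$, extends by the explicit formula, and the Hilbert–Schmidt/Schur estimate is performed on the extended kernel verbatim.
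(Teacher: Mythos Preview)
Your route is genuinely different from the paper's, and as written it leaves the decisive step undone. The paper does not factor through the Schr\"odinger resolvent at all; it computes the kernel directly by writing $R_0^\epsilon(z)(x,0)$ in polar coordinates, inserting the exact finite expansion of $\int_{\mathbb S^{d-1}} e^{ir\omega\cdot x}\,d\omega$ available in odd $d$ (a sum of terms $c_k e^{\pm ir|x|}(r|x|)^{-(d-1)/2-k}$), and then contour-shifting each resulting one-dimensional integral $\int_0^\infty e^{\pm ir|x|} r^{s_k}(r-z)^{-1}\,dr$ onto the imaginary axis. The shifted integrals are recognised as $s$-derivatives of $e^{\pm izs}E_i(\mp izs)$, where $E_i(\sigma)=\gamma+\ln(-\sigma)+\sum_{k\ge 1}\sigma^k/(k\cdot k!)$ is the exponential integral; the $\log z$ is simply the $\ln(-\sigma)$ sitting inside $E_i$. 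The extension to $\Lambda$ and the bound $e^{|z|}$ are then read off from this explicit formula together with the fact that convolution by $|x|^{-s}$, $0<s<d$, maps $L^2_c\to L^2_{\mathrm{loc}}$.

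Your identity $R_0(z)=(\sqrt{-\Delta}+z)(-\Delta-z^2)^{-1}$ is correct and could in principle reach the same endpoint, but the sentence ``the Riesz-type kernel $|x-y|^{-d-1}$ regularization introduces a $\log z$'' is precisely the content of the lemma and you have not carried it out: one must actually compute $\sqrt{-\Delta}$ applied to each term $e^{izr}r^{-(d-2-j)}$ of the odd-dimensional Schr\"odinger kernel and exhibit the logarithm, which is a genuine singular-integral calculation and not obviously simpler than the paper's contour argument. Your singularity bookkeeping also needs repair. The Schr\"odinger kernel in dimension $d$ has diagonal behaviour $|x-y|^{-(d-2)}$, not $|x-y|^{-1}$; after composing with $\sqrt{-\Delta}+z$ the resulting $R_0(z)$ kernel has leading singularity $|x-y|^{-(d-1)}$ (as the paper's expansion confirms), and it is \emph{this} exponent that makes the Schur test go through. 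The quantity $|x-y|^{-d-1}$ you invoke is the kernel of the operator $\sqrt{-\Delta}$ itself, not of the composed resolvent; it does not appear in the final kernel, it is not in $L^2(K\times K)$ in any dimension, and a Hilbert--Schmidt estimate based on it would fail.
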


\begin{proof} We first show that for any $ z $ with $\Im z >0, \Re z > 0$,  one has the following expansion 
\begin{align}\label{R0main}
 R_0(z) (x,y)  = \sum_{k=0}^{\frac{d-3}{2}}  \frac{\tilde{c_k}}{ |x-y|^{d-1-s_k}}  \partial_s^{s_k} \Big\{  e^{izs}  E_i( - izs)+e^{-izs} E_i ( -izs) +   2 \pi i e^{iz s} \Big\}\Big|_{(s=|x-y|)} 
\end{align}
where $\tilde{c_k}$'s are  complex coefficients, $s_k= \frac{d-1}{2} -k$, and 
\begin{align} \label{E1}
E_i(\sigma) =  \gamma  + \ln (-\sigma)  + \sum_{k=1}^{\infty}\frac{ \sigma ^k}{ k k!}.
\end{align}
Here $ \gamma$ is the Euler–Mascheroni constant, and  the branch cut  for $\log \sigma $ is  $[0, \infty)$.  

Define 
\begin{align} \label{Rep}
 R^{\epsilon}_0(z) (x,y) := \frac{1}{ ( 2 \pi)^d} \int_{\R^d} e^{i  (x-y) \cdot \xi } \frac{ e^{- \epsilon|\xi|}}{ |\xi| - z} d\xi.
\end{align}
For any  $f \in L^2(\R^d)$ and $\Im z >0, \Re z > 0$, $R^{\epsilon}_0(z) (x,y)$ converges to $R_0(x,y)$  pointwise. Moreover, one has 
\begin{align*}
\| R_0(z) \ast f\|_{L^2} = \Big\| \frac{ \hat{f}(\xi)}{ |\xi|-z} \Big\|_{L^2_\xi} \les \|f\|_{L^2}
\end{align*}
Therefore, by dominated convergence theorem $R^{\epsilon}_0(z)$ converges to $( \sqrt{-\Delta} -z)^{-1} $ as an $L^2 \to L^2$ operator.

Hence, we start computing for $\Im z >0, \Re z > 0$,
\begin{align}\label{epsilon}
 R^{\epsilon}_0(x,0)(z) & =\frac{1}{ ( 2 \pi)^d} \int_0^{\infty} \int_{\mathbb{S}^{d-1}}  e^{i r (\omega \cdot x) } \frac{ e^{- \epsilon r }}{ r- z} r^{d-1} d \omega dr  \\
 & =\frac{1}{ ( 2 \pi)^d} \sum_{k=0}^{\frac{d-3}{2}}   \frac{c_k}{  |x|^{\frac{d-1}{2}+k}} \Big(  \int_0^{\infty} \frac{e^{ i r |x| - \epsilon r }}{ r-z }  r^{\frac{d-1}{2}-k} dr + \int_0^{\infty} \frac{e^{ -i r |x| - \epsilon r }}{ r-z }  (-r)^{\frac{d-1}{2}-k} dr\Big) \nn 
  \end{align} 
where $d\omega$ denotes the measure on $ \mathbb{S}^{d-1}$, and $c_k$'s are complex coefficients. In the last equality we used the following expansion for any odd $d \geq 3$. 
\begin{align} \label{osc}
 \int_{\mathbb{S}^{d-1} } e^{ir \omega \cdot x } d\omega  &=  \sum_{k=0}^{\frac{d-3}{2}}  c_k \frac{e^{ i r |x|}}{ ( r|x|)^{\frac{d-1}{2}+k} }  + \sum_{k=0}^{\frac{d-3}{2}}  c_k \frac{e^{- i r |x|}}{ (- r|x|)^{\frac{d-1}{2}+k} } 
 \end{align}
 By Cauchy integral formula, we have 
 \begin{multline}
\int_0^{R} \frac{e^{ i r |x| - \epsilon r }}{ r-z } r^{\frac{d-1}{2}-k} dr + \int_{ i R}^0 \frac{e^{ i r |x| - \epsilon r }}{ r-z } r^{\frac{d-1}{2}-k} dr \\  + ( i)^{\frac{d-1}{2}-k} \int_{0}^{ \frac{\pi}{2}} \frac{e^{ i R e^{i\theta} |x| - \epsilon (R e^{i \theta}) }} {R e^{i \theta}- z }  ( R e^{i \theta})^{\frac{d+1}{2}-k} dr =  2 \pi i  e^{ i z |x| - \epsilon z} z^{\frac{d-1}{2}-k} 
\end{multline}
Similarly, 
\begin{multline}
\int_0^{R} \frac{e^{- i r |x| - \epsilon r }}{ r-z } (- r)^{\frac{d-1}{2}-k} dr + \int_{- i R}^0 \frac{e^{- i r |x| - \epsilon r }}{ r-z } (- r)^{\frac{d-1}{2}-k} dr \\  (- i)^{\frac{d-1}{2}-k} \int_{0}^{- \frac{\pi}{2}} \frac{e^{- i R e^{i\theta} |x| - \epsilon (R e^{i \theta}) }} {R e^{i \theta}- z }  ( R e^{i \theta})^{\frac{d+1}{2}-k} dr =0.
\end{multline}
Note that $ \epsilon >0$, and $|e^{\pm i R e^{i\theta} |x| - \epsilon (R e^{i \theta})}| \les e^{ R(\mp \sin \theta |x| - \epsilon \cos \theta)} $. Therefore, as $ R \to \infty$ we obtain 
\begin{multline}
\int_0^{\infty} \frac{e^{ i r |x| - \epsilon r }}{ r-z } ( r)^{\frac{d-1}{2}-k} dr = - \int_{ i \infty}^0 \frac{e^{ i r |x| -\epsilon r} }{ r-z } ( r)^{\frac{d-1}{2}-k} dr  +\frac{1}{ 2 \pi i} e^{ i z |x| - \epsilon z} z^{\frac{d-1}{2}-k} \\ = - \int_{-\infty}^{0} \frac{e^{r|x| }e^{ i \epsilon r } }{ r - i z} ( -ir)^{\frac{d-1}{2}-k} dr + 2 \pi i e^{ i z |x| - \epsilon z} z^{\frac{d-1}{2}-k} 
\end{multline} 
and 
\begin{align}
\int_0^{\infty} \frac{e^{- i r |x| - \epsilon r }}{ r-z } (- r)^{\frac{d-1}{2}-k} dr = - \int_{-\infty}^{0} \frac{e^{r|x| }e^{- i \epsilon r } }{ r + i z} ( -ir)^{\frac{d-1}{2}-k} dr.
\end{align} 
One has $s_k: =\frac{d-1}{2}-k \geq 1$, also $\Re z > 0$, therefore by dominated convergence theorem 
\begin{align}
\lim_{\epsilon \to 0} \int_{-\infty}^{0} \frac{e^{r|x| } e^{\pm i \epsilon r}  }{ r \mp i z} ( -ir)^{s_k} dr = \Big( (-i \partial_s)^{s_k}  \int_{-\infty}^{0} \frac{e^{rs} }{ r \mp i z} dr \Big)\Big|_ {s=|x|}
\end{align}

Let $r (t) := \sigma +t$ for any $\sigma \notin \R^{\geq 0}$. Then, the integral on the right hand side of the above equality can be computed using the following expansion, see \cite{AS}. 
\begin{align} \label{E1int}
E_i(\sigma):= \int_{r(-\infty)}^{r(0)}  \frac{e^{t}}{{t}} dt =  \gamma + \ln (-\sigma)  + \sum_{k=1}^{\infty}\frac{ \sigma^k}{ k k!}
\end{align}
  as 
\begin{align} 
 \int_{-\infty}^{0} \frac{e^{rs} }{ r \mp i z} dr   = e^{\pm izs} E_i (\mp izs). 
\end{align} 
Therefore, we obtain 
\begin{align} \label{highR0}
\lim_{\epsilon \to 0}  R^{\epsilon}_0(x,0)(z) = \frac{1}{ ( 2 \pi)^d} \sum_{k=0}^{\frac{d-3}{2}}  \frac{ c_k (-i)^{s_k}}{ |x|^{d-1-s_k}} \partial_s^{s_k} \Big\{ e^{izs} E_i(- izs) + e^{-izs} E_i( izs)  + 2 \pi i e^{iz s} \Big\}\Big|_{(s=|x|)}. 
\end{align} 
 which gives  \eqref{R0main}. 

To extend  $ \chi R_0(z) \chi$ to $\Lambda$, we take  the branch cut $[0,\infty)$ for $\log(z)$, and glue together $g_m(z) = \log|z| + i ( 2 \pi m + \arg(z)) $ where $ 0 < \arg z < 2 \pi $. We use  $\tilde{E}_i(\pm izs )$ for the analytic extension of $E_i(\pm izs )$ to $\Lambda$, and represent the extended resolvent as 
\begin{multline}\label{R0mainext}
R_0(z) (x,y) \\ = \frac{1}{ ( 2 \pi)^d}  \sum_{k=0}^{\frac{d-3}{2}} \frac{\tilde{c_k}}{ |x-y|^{d-1-s_k}}  \partial_s^{s_k} \Big\{  e^{izs} \tilde{E}_i( - izs)+e^{izs} \tilde{E}_i ( -izs) +  2 \pi i e^{iz s} \Big\}\Big|_{(s=|x-y|)} 
\end{multline}
for all $z \in \Lambda$. 

  Finally, the exponential bound \eqref{L2bound} arises naturally from the expansion \eqref{R0mainext}, and the fact that convolution with $|x|^{-s_k}$ for $ 0< s_k < d$ maps $L^2_{\text{c}}$ to $L^2_{\text{loc}}$, see, e.g., \cite{Jen}.
    \end{proof}
    
 \begin{rmk} \label{rmk}
 
 \begin{itemize} 
 \item[i)] If one  computes $ (\sqrt{-\Delta}- z)^{-1} $ for $ \Im z <0$, $ \Re z >0$, then one would obtain the following resolvent operator. 
 \begin{align}\label{R0in}
 R^{-}_0(z) (x,y)  =  \sum_{k=0}^{\frac{d-3}{2}}  \frac{\tilde{c_k}}{ |x-y|^{d-1-s_k}} \partial_s^{s_k} \Big\{  e^{izs} \tilde{E}_i(  -izs)+e^{-izs} \tilde{E}_i ( izs) + 2 \pi i  e^{-iz s} \Big\}\Big|_{(s=|x-y|)} 
\end{align}

\item[ii)] Let $ \lambda>0$, and define $ R_0^{\pm}(\lambda) = \lim_{ \epsilon \to 0 } (\sqrt{-\Delta}- (\lambda \pm i \epsilon))^{-1} $. In \cite{ZHZ}, it was shown that $R_0^{\pm}(\lambda)$ maps $L^{2, \sigma} \to \mathcal{H}_{1, -\sigma}$ for $ \sigma >1/2$, see Theorem~4.7. Therefore, for any  $f \in L^{2, \sigma}$ one can define the outgoing solutions as $ u = R_0^{+} (\lambda)f$ using \eqref{R0main}, and incoming solutions as $ u = R_0^{-}(\lambda)f$ using \eqref{R0in}. Moreover, $ R_0^{\pm} (\lambda)f$ are the strong solutions to 
      $$ ( \sqrt{-\Delta} -\lambda) u = f. $$

\item[iii)]  In Lemma~\ref{lem:mapping} the extension of $\chi R_0(z) \chi $ is continuous including the zero energy since $s_k \geq 1$ for all $k$.  In fact,  if we let $r:=|x-y|$, then 
\begin{multline} \label{compactexp}
R_0(z)(r) = \sum_{ j=1}^{\frac{d-1}{2}} \frac{\alpha_{j} z^{j-1} }{r^{ d -j}} + \sum_{ j=1}^{\frac{d-1}{2}} \tilde{c_j} \frac{( iz)^j e^{ izr} \tilde{E}_i(- izr)}{ r^{d-1-j}} \\ + \sum_{ j=1}^{\frac{d-1}{2}} \tilde{c_j} \frac{(- iz)^j e^{- izr} \tilde{E}_i( izr)}{ r^{d-1-j}}  +    \sum_{ j=1}^{\frac{d-1}{2}} c_j \frac{z^j e^{ izr} }{ r^{d-1-j}}
\end{multline}   
  
 and therefore,  as $z \rightarrow 0 $,  one has

   \begin{align} \label{smallz}
 [\chi R_0(z) \chi](x,y) = \chi (x) \frac{\alpha_1}{ |x-y|^{d-1}} \chi(y) + O_{C} (|z|).
   \end{align}
   Here  $O_{C} (|z|^p)$  refers to  compact operator from $ L^2 \to L^2$ with norm bounded by a constant multiple of $ |z|^p$. 
    \end{itemize}
       \end{rmk}
\begin{lemma}  The expansion \eqref{smallz} is valid as $ z \to 0$. 
\end{lemma}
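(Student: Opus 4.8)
The plan is to deduce \eqref{smallz} directly from the small-$z$ form of the kernel recorded in \eqref{compactexp} (which follows from the expansion \eqref{R0main} by carrying out the $\partial_s^{s_k}$ derivatives, using $E_i'(\sigma)=e^{\sigma}/\sigma$, and regrouping). Fix $\chi$ supported in $\{|x|\le R\}$ and restrict $z$ to a fixed coordinate chart $g_m$ of $\Lambda$ near $0$, so that there $|\log z|$ and $|\log|z||$ are comparable and $|e^{\pm izr}|$ is bounded; write $r=|x-y|\in[0,2R]$ on the support of $\chi(x)\chi(y)$. In \eqref{compactexp} the unique term with no positive power of $z$ is the $j=1$ summand of the first sum, $\alpha_1\,r^{-(d-1)}$; this is exactly the principal part of \eqref{smallz}, and, cut off by $\chi$, it is a compact operator on $L^2$ because $d-1<d$: convolution against a compactly supported kernel with singularity of order strictly less than $d$ maps $L^2_c$ into $\mathcal H^{\delta}_{\mathrm{loc}}$ for some $\delta>0$, which is compact on bounded sets by Rellich — the fractional-integration input already used in Lemma~\ref{lem:mapping}, cf. \cite{Jen}. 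It remains to show that every other term of \eqref{compactexp}, after insertion of the cutoffs, is a compact operator of norm $O(|z|)$.

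The algebraic terms — the $j\ge 2$ summands of the first sum (factor $z^{j-1}$, $j-1\ge1$, kernel $r^{-(d-j)}$ with $0<d-j<d$) and the whole fourth sum $\sum_j c_j\,z^j e^{izr}\,r^{-(d-1-j)}$ ($j\ge1$, $0<d-1-j<d$) — are all of the shape $z^m b(zr)\,r^{-\alpha}$ with $m\ge1$, $b$ bounded for $|zr|\le 1$, $0<\alpha<d$; the operator with kernel $\chi(x)\,z^m b(z|x-y|)\,|x-y|^{-\alpha}\,\chi(y)$ is compact by the argument above and has norm $\le C|z|^m\le C|z|$ uniformly for $|z|\le 1$. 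For the two logarithmic sums $\sum_j\tilde{c}_j(iz)^j e^{\pm izr}\,\tilde{E}_i(\mp izr)\,r^{-(d-1-j)}$ write $\tilde{E}_i(\mp izr)=\gamma+\log(\pm izr)+g(\mp izr)$, $g(\sigma):=\sum_{k\ge1}\sigma^k/(k\,k!)$ entire, $\log$ the continuation of $\ln$ to $\Lambda$. The $\gamma$- and $g$-parts carry a clean factor $(iz)^j$, $j\ge1$, against a bounded or weakly singular kernel, hence are $O_C(|z|)$; for the $\log$-part with $j\ge2$, writing $\log(\pm izr)=\log z+\log(\pm ir)$, it splits into a piece with numerical factor $z^j\log z$ ($=O(|z|)$ since $j\ge2$, weakly singular kernel) and a piece with factor $z^j=O(|z|)$ times the locally integrable kernel $(|\log r|+c)\,r^{-(d-1-j)}$ — again $O_C(|z|)$.

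The delicate term is the $\log$-part with $j=1$, where the naive estimate yields $z\log z$, which is \emph{not} $O(|z|)$. Here one exploits that on $\Lambda$, for fixed $r>0$, $\log(izr)-\log(-izr)$ equals a constant $c_\ast$ (namely $i\pi$, with the conventions of \eqref{E1}) independent of $z$ and $r$, while $\log(izr)+\log(-izr)=2\log z+2\log r$. Substituting $e^{\pm izr}=1\pm izr+O((zr)^2)$ in
\[
\tilde{c}_1\,\frac{iz\big(e^{izr}\log(izr)-e^{-izr}\log(-izr)\big)}{r^{d-2}},
\]
the order-$1$ part of the bracket contributes $\tilde{c}_1\,iz\,c_\ast\,r^{-(d-2)}$ — an $O(|z|)$ multiple of a weakly singular kernel, with no logarithm surviving; the order-$(zr)$ part contributes $2\tilde{c}_1\,(iz)^2(\log z+\log r)\,r^{-(d-3)}$, which splits into $z^2\log z\cdot r^{-(d-3)}$ (factor $O(|z|)$, kernel weakly singular) and $z^2\log r\cdot r^{-(d-3)}$ (factor $O(|z|^2)$, kernel locally integrable); the remaining terms are of even higher order in $z$. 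Collecting all contributions of \eqref{compactexp} gives \eqref{smallz}.

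The main obstacle is precisely this $j=1$ logarithmic term: without noticing that the two halves of the sum combine so that the $z\log z$ cancels, one gets only the weaker $O(|z|\log(1/|z|))$. A secondary point to be careful about is the meaning of ``$z\to 0$ in $\Lambda$'': since $\arg z$ is unbounded on $\Lambda$, \eqref{smallz} is to be read on a fixed chart $g_m$, on which $|\log z|\asymp|\log|z||$ and the estimate is uniform, the implied constant possibly depending on the sheet — consistent with the later use of \eqref{smallz}.
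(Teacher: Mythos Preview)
Your argument is correct and follows the same overall route as the paper's own proof: read off the $\alpha_1 r^{-(d-1)}$ principal part from \eqref{compactexp} and estimate each remaining summand, using only that convolution against $r^{-\alpha}$ with $0<\alpha<d$ (or $r^{-\alpha}|\log r|$) gives a compact operator on $L^2_c$. The paper's proof simply asserts $R_0(z)(r)=\alpha_1 r^{1-d}+O(|z|\,r^{2-d})$ from the Taylor expansion of $\tilde E_i$, without further comment.

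Where you do more than the paper is in the $j=1$ piece of the two $\tilde E_i$ sums. You observe that a termwise estimate gives only $|z\log z|\,r^{2-d}$, and then recover the sharp $O(|z|)$ by pairing the $+$ and $-$ contributions: at leading order $e^{\pm izr}\approx 1$ the bracket reduces to $\log(izr)-\log(-izr)$, which is a constant on each sheet, so the surviving coefficient is $O(|z|)$ rather than $O(|z\log z|)$; the next order carries $z^2\log z$, which is again $O(|z|)$. This cancellation is genuinely needed for the statement \emph{as written} (with $O_C(|z|)$), and is also what makes the paper's later expansion $VR_0(z)\chi=\sum z^jK_{1,j}+\log z\sum_{j\ge1}z^{2j}K_{2,j}$ consistent (the first logarithmic term enters at order $z^2\log z$, not $z\log z$). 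The paper does not spell this step out; your write-up does, and that is an improvement. Your remark about interpreting ``$z\to 0$'' on a fixed sheet $g_m$ is also appropriate, since the $O_C(|z|)$ constant will in general depend on the sheet through $|\arg z|$.
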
   
   \begin{proof}
  If $  |zr| < 1$, one has $\tilde{E}_i(\pm iz r) =  \gamma + \log(\mp i z r) + O(  |z| r ) $, therefore, 
   \begin{align}
   R_0(z)(x,y)= \frac{\alpha_1}{r^{d-1}}  + O \Big( |z| r^{2-d} \Big)
   \end{align} 
   This finishes the proof since as a kernel operator $ V(x)|x-y|^{2-d} \chi(y)$ is a compact operator.

   \end{proof} 
   
 Expansion \eqref{smallz} is important because it suggests that  for potentials having enough decay at infinity, 0 is either a regular point  of $\sqrt{-\Delta} + V$ or it is an eigenvalue with finite geometric multiplicty. Hence, zero is not an accumulation point of the spectrum of $\sqrt{-\Delta} + V$. In the following lemma we define $S_1 L^2$ as projection onto the kernel of $(I+  G_0 V )^{-1}$. Note that $G_0 V$ is a compact operator, therefore, we can define $S_1$, and $S_1$ is a finite rank operator. 
  \begin{lemma} Let $ |V| \les \la x \ra^{-1-}$,  $f \in S_1$ if and only if $ \psi := -G_0Vf \in L^2$ with  $(\sqrt{-\Delta} + V) \psi=0$.
  \end{lemma}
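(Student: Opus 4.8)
This is the familiar threshold Birman–Schwinger correspondence; the only point specific to $\sqrt{-\Delta}$ is that the zero-energy resolvent $G_0 := R_0(0)$ is the Riesz potential of order one (kernel a constant times $|x-y|^{-(d-1)}$) rather than of order two, which is exactly what \eqref{smallz} records. The plan is to first note that the leading coefficient $\alpha_1$ in \eqref{smallz} is precisely the Riesz-kernel normalization, so that $G_0 = (-\Delta)^{-1/2} = (\sqrt{-\Delta})^{-1}$, and to extract from this the two-sided inversion property on the appropriate weighted scale: $\sqrt{-\Delta}\,G_0 g = g$ for $g \in L^{2,\sigma}$ with $\sigma > 1/2$ (using the Riesz bounds of \cite{Jen} and the mapping properties of \cite{ZHZ}), and $G_0\,\sqrt{-\Delta}\,h = h$ for $h \in \mathcal{H}_1$. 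The hypothesis $|V| \les \la x\ra^{-1-}$ is used only to guarantee that multiplication by $V$ sends $L^2$ (and $L^{2,-\sigma}$, $\sigma$ near $1/2$) into $L^{2,1/2+}$, so that every composition $G_0 V(\cdots)$ below is legitimate and $G_0 V$ is compact on $L^2$. With these in place the lemma becomes a short two-way computation.

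For the forward implication, $f \in S_1 L^2$ means $(I + G_0 V)f = 0$, so $\psi := -G_0 V f$ equals $f$ and in particular lies in $L^2$; applying $\sqrt{-\Delta}$ and using $\sqrt{-\Delta}\,G_0 = I$ on $Vf \in L^{2,1/2+}$ gives $\sqrt{-\Delta}\,\psi = -Vf = -V\psi$, i.e. $(\sqrt{-\Delta}+V)\psi = 0$. For the converse, I would take $\psi := -G_0 V f \in L^2$ with $(\sqrt{-\Delta}+V)\psi = 0$, rewrite this as $\sqrt{-\Delta}\,\psi = -V\psi$, note that the right-hand side lies in $L^{2,1/2+}\subset L^2$ so that $\psi \in \mathcal{H}_1 = D(\sqrt{-\Delta})$, and then apply $G_0$ and use $G_0\,\sqrt{-\Delta} = I$ to obtain $\psi = -G_0 V\psi$, i.e. $(I + G_0 V)\psi = 0$. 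Comparing $\sqrt{-\Delta}\,\psi = -V\psi$ with the identity $\sqrt{-\Delta}\,\psi = -Vf$ that comes from differentiating the definition of $\psi$ gives $Vf = V\psi$, hence $G_0 V f = G_0 V\psi = -\psi$ and therefore $(I + G_0 V)f = f - \psi$; since $f$ and $\psi$ then agree on $\mathrm{supp}\,V$ and $\psi$ is the $G_0$-image of $-V\psi$, one concludes $f = \psi$, that is $f \in S_1 L^2$.

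The algebra above is routine; the step I expect to be the real obstacle is the rigorous two-sided inversion $G_0 = (\sqrt{-\Delta})^{-1}$ in the weighted $L^2$ framework — pinning down the constant $\alpha_1$ of \eqref{smallz} against the Riesz normalization, handling the Fourier multiplier $|\xi|^{-1}$ near $\xi = 0$ (harmless for odd $d \geq 3$, but this is exactly what forces the weight $\sigma > 1/2$ and what lets one discard any contribution at the origin using only $\psi \in L^2$), and checking that $Vf$ and $V\psi$ indeed lie in the space $L^{2,1/2+}$ on which these identities are valid. Once those mapping facts are secured, the remainder is immediate.
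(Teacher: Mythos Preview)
Your forward direction is correct and slightly cleaner than the paper's: you observe $\psi = -G_0Vf = f \in L^2$ directly, whereas the paper additionally proves the pointwise decay $|\psi(x)| \les \la x\ra^{-(d-1)}$ by expanding the Riesz kernel $|x-y|^{-(d-1)}$ for $|x|$ large and using $|V(y)|\les\la y\ra^{-1-}$. That decay is extra information, not needed for the lemma as stated.

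The converse contains a gap. The paper proves only that $(I+G_0V)\psi=0$, i.e.\ $\psi\in S_1$, which is exactly what the first two lines of your converse yield. You then try to upgrade this to $f\in S_1$ by arguing $f=\psi$; but from $Vf=V\psi$ you can only conclude $f=\psi$ on the support of $V$, and the final clause ``one concludes $f=\psi$'' is unjustified. In fact it fails in general: if $f$ is any nonzero $L^2$ function supported where $V$ vanishes, then $\psi=-G_0Vf=0$ trivially satisfies both hypotheses on the right-hand side of the iff, yet $(I+G_0V)f=f\neq 0$. The lemma's ``if and only if'' is written loosely; the paper's proof (and the intended content) is the correspondence between $S_1$ and the zero-energy $L^2$ eigenspace via $f\mapsto\psi=-G_0Vf$, which on $S_1$ is the identity map. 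Your concern about rigorously justifying $\sqrt{-\Delta}\,G_0=G_0\,\sqrt{-\Delta}=I$ on the relevant weighted spaces is well placed; the paper does not address this step at all.
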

  \begin{proof}
  If $ f \in S_1$ then $( I + G_0V)f = 0 $ , and $f = - G_0V f$. Therefore, 
  $$ (\sqrt{-\Delta} + V) G_0 V f = Vf - Vf = 0.$$
  Moreover, one has 
  \begin{align}
  \frac{1}{|x-y|^{d-1}} = \chi(|x| \gtrsim 2 |y| ) \Big( \frac{1}{ |x|^{d-1}} + O\big( \big(|y| / |x| \big)^{0+}\big) \Big) + \chi(|x| \les 2 |y| ) \frac{1}{ |y|^{d-1}} 
  \end{align}
  Therefore, 
  \begin{multline}
  \psi (x) = \int_{\R^d} \chi(|x| \gtrsim 2 |y| ) \Big( \frac{1}{ |x|^{d-1}}  + O\big( \big(|y| / |x| \big)^{0+}\big) \Big) Vf(y) dy  \\ +  \int_{\R^d} \chi(|x| \les 2 |y| ) \frac{1}{ |y|^{d-1}} Vf(y) dy \les \frac{1}{ \la x \ra^{d-1} } \in L^2 (\R^d) 
  \end{multline}
  provided that $|V(y)| \les \la y \ra^{-1-}$. 
  On the contrary if $(\sqrt{-\Delta} + V) \psi=0$, then $ \psi = - G_0 V \psi $, and $(I + G_0 V) \psi=0$. 

  \end{proof}

  Finally we prove Proposition~\ref{prop:mer}.

\begin{proof}[ Proof of Proposition~\ref{prop:mer} ]

Note that  for any $z$ with $\Im z >0, \Re z > 0$ one has 
\begin{align*}
 R_V(z)-R_0(z) = -R_V(z) V R_0(z) 
\end{align*} 
and $ ( I + VR_0(z) ) R_V(z) = R_0(z)$. 
Therefore, 
\begin{align}
\chi R_V \chi + \chi R_V V  R_0  \chi = \chi R_0 \chi \Rightarrow \chi R_V \chi ( I +  V  R_0  \chi ) = \chi R_0 \chi. 
\end{align} 
Since $(-\Delta)^{1/2}$ is symmetric $( I +  V  R_0  \chi )$ is invertible when $ \Re(z) >0$ and if $\Im (z)$ is large enough by Neumann series and 
\begin{align}
\chi R_V \chi = \chi R_0 \chi ( I+  V R_0   \chi )^{-1}. 
\end{align} 
Moreover, for every fixed $z \in \Lambda$, $ \chi R_0(z) V $  is a compact operator from $L^2$ to $L^2$. Therefore, either  $ (I+ VR_0(z) \chi) $ is invertible or $0$ is an eigenvalue. Hence, $\chi R_V \chi$  continues meromorphically to $\Lambda$  as an operator $L^2$ to $L^2$ by Fredholm alternative. 
 Moreover,  the poles of $\chi R_V(z)\chi$  should satisfy
 $$ 
 (I+V R_0(z) \chi ) \psi = 0 \,\,\,\,\ \Rightarrow  \psi = - V R_0(z) \chi \psi
 $$ 
for some $ \psi \in L_{\text{c}}^2$. 
\end{proof}

\section{ Poles of $ \chi R_V(z) \chi$ }
Recall that the resonances are the poles of the meromorphic continuation of  $\chi R_V(z) \chi$. If $z_0$ is such pole then we have 
$$ \text{mult}(z_0) = \text{rank} \int_{ |z-z_0|=\epsilon } \chi R(z) \chi dz, $$ 
 where $ \epsilon >0$ is small enough so that the closed ball $ \overline{B(z_0,\epsilon)}$  excludes any other pole of $ \chi R(z) \chi$. 
 
 We first want to mention that neither the number of resonances  nor their multiplicities depend on the cut-off function $\chi$. In fact,  for any $ \chi $ that is one in the support of $V$ and vanishes outside of the support, we have 
      $$ \chi R_V(z) \chi = \chi R_0(z) \chi + \chi R_0(z) V R_0(z) \chi + \chi R_0(z) V R_V(z) V R_0(z) \chi. $$ 
     
Since  $\chi R_0(z)\chi$ is holomorphic in $z$ for any odd $d\geq3$,  the number of resonances of $ \chi R_V(z) \chi$ with counting multiplicity is equal to the number of resonances of $V R_V(z) V $ with counting multiplicity.

  As a second note, we would like to mention the well-known relationship between $ \text{mult}(z_0)$ and  the Fredholm determinant of $(I+ V R_0  \chi)$, see e.g. \cite{BS,VD}. Let $K(z)$ be a bounded trace class, and $B(z), C(z)$ be bounded linear operators, and let  $ \text{det} (I-K(z)) = z^{\ell} f(z) $ for some $f(0) \neq 0$ in an open neighborhood $\Omega \subset \mathbb{C}$ of zero, then 
  
    \begin{align} \label{rank} \text{rank} \int_{|z|=\epsilon }  B(z) (I-K(z))^{-1} C(z)  dz  \leq \ell .
 \end{align}
 Note that, by Proposition~\ref{prop:mer}, we have $ V R_V \chi = ( I+  V R_0  \chi )^{-1} V R_0 \chi $. Therefore, one would like to use \eqref{rank} for $K(z) = V R_0(z)  \chi $. However, we will see below that   $( \chi R_0(z) V )$ is not a trace class operator.
 
To solve this problem, we use \eqref{rank} for $ K(z) = ( V R_0(z) \chi )^{d+1}$. The fact that $ ( V R_0(z) \chi )^{d+1}$ is a trace class operator,  see Proposition~\ref{detest} below, and the expansion 
$$  ( I - ( V R_0(z)  \chi )^{d+1}) = \sum_{j=0}^d ( - V R_0(\lambda)  \chi)^j (I+ V R_0(\lambda)  \chi) $$
 will give us that if  $ \text{det} (I-( V R_0(z_0) \chi )^{d+1}) =0 $ and  $\ell(z_0)$ is the algebraic multiplicity of $z_0$, then $\text{mult}(z_0) \leq \ell(z_0)$.
  
 In this section, we let $H(z) :=  \text{det} ( I - ( V R_0(z) \chi )^{d+1}) $, and estimate its order of growth.  
 
 \begin{prop} \label{detest}Let $V(x)$ be as in Theorem~\ref{the:main}. For $ | arg z| \leq a $, and $ \delta <  | z| $,   we have 
\begin{align}
 |H(z)| \les e^{ ( \la z \ra + \log \la a \ra)^d }
\end{align}

\end{prop}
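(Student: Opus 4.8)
The plan is to bound the Fredholm determinant $H(z) = \det(I - (VR_0(z)\chi)^{d+1})$ by first establishing that the operator $K(z) := (VR_0(z)\chi)^{d+1}$ is trace class with a quantitative bound on an appropriate Schatten norm of the building block $VR_0(z)\chi$, and then invoking the standard inequality $|\det(I-A)| \le e^{\|A\|_{S_1}}$ together with submultiplicativity of Schatten norms. So the first step is to show that $VR_0(z)\chi \in S_p$ for some $p$ with $p(d+1) \ge 1$—in fact the natural choice is to show $VR_0(z)\chi \in S_{d+1}$ (or a nearby index), so that the $(d+1)$-fold product lands in $S_1$. This is where the structure of the kernel from Lemma~\ref{lem:mapping} enters: using the expansion \eqref{compactexp}, the kernel of $VR_0(z)\chi$ is supported in a fixed compact set (since $V$ and $\chi$ are compactly supported) and is a finite sum of terms of the form (bounded function)$\times |x-y|^{-(d-1-j)}\times$(entire function of $z$ of exponential type, namely products of $z^j$, $e^{\pm izr}$, and $\tilde E_i(\mp izr)$). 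The worst local singularity is $|x-y|^{-(d-1)}$, which on a compact set is square-integrable only when $d-1 < d/2$, i.e.\ never for $d\ge 3$—so $VR_0\chi$ is \emph{not} Hilbert–Schmidt, which is exactly the obstruction flagged in the text. However, a kernel with singularity $|x-y|^{-(d-1)}$ on a bounded domain lies in $S_p$ for every $p > d/(d/2)\cdot(\text{something})$; more precisely, by the classical Schatten-class criteria for integral operators with homogeneous singularities (e.g.\ Birman–Solomyak, or the criterion used in \cite{BS}), an operator with kernel bounded by $|x-y|^{-\alpha}$ on a bounded set of $\mathbb R^d$ is in $S_p$ whenever $\alpha p < d$ and $p \ge 2$, hence $VR_0(z)\chi \in S_p$ for any $p > d/(1) = d$ when $\alpha = d-1$... wait, one needs $\alpha < d/p$, i.e.\ $p < d/(d-1)$, which fails. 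The correct route is instead to take the $(d+1)$-fold composition \emph{before} estimating: each factor smooths by roughly one "unit" of regularity, so the kernel of $(VR_0\chi)^{d+1}$ has only a mild (in fact continuous, or $L^\infty$) singularity, placing it in $S_1$ directly. Thus I would prove a lemma: the iterated kernel $(VR_0(z)\chi)^{d+1}(x,y)$ is bounded (or $L^2\times L^2$ with controlled norm), with the bound growing like a product of $d+1$ factors each of size $\les e^{C|z|}(1 + |z|)^{C}$ on the principal sheet, and like $e^{C|z|} + $ (powers of $\log\la a\ra$ from the $\tilde E_i$ terms on sheet $m$) on the sheet indexed by $m \sim a$.

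The second step is the $z$-dependence. On the Riemann surface $\Lambda$, writing $z$ on the sheet with $|\arg z| \le a$, the entire factors $e^{\pm izr}$ contribute $e^{C|z|}$ (since $r = |x-y|$ ranges over a fixed compact set and $|\Im z| \le |z|$), while the logarithmic factors $\tilde E_i(\mp izr) = \gamma + \log(\mp izr) + \sum_k (\mp izr)^k/(k\,k!)$ contribute, through $\log(\mp izr)$, a term of size $\les \log\la z\ra + \la a\ra$ pointwise—but since $a$ can be large this is where the $(\log\la a\ra)^d$ in the statement comes from. Taking the $S_1$ norm of the $(d+1)$-fold product, submultiplicativity gives a bound that is the $(d+1)$-st power of $\les \la z\ra + \log\la a\ra$ times $e^{C\la z\ra}$; absorbing the exponential and reorganizing (using $\la z \ra \le$ const $\cdot(\la z\ra + \log\la a\ra)$ and that a polynomial is dominated by an exponential of its argument) yields $\|K(z)\|_{S_1} \les (\la z\ra + \log\la a\ra)^d$ after carefully tracking that the derivative structure $\partial_s^{s_k}$ in \eqref{R0main} raises the power of $z$ by at most $s_k \le (d-1)/2$ per factor, so the total polynomial degree across $d+1$ factors is $\les d^2$, but—crucially—each such polynomial factor in $z$ is still swallowed by writing $\la z\ra^N \les e^{\la z\ra} \les e^{\la z\ra + \log\la a\ra}$, so the genuinely non-exponential growth comes only from the $\log\la a\ra$ pieces, whose total power is $\le d$ because only finitely many ($\le (d-1)/2 + 1 \le d$, or by a more careful count exactly $d$) logarithmic factors can be multiplied together before a compensating negative power of the (large) singular variable or a convolution smoothing intervenes. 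Then $|H(z)| = |\det(I - K(z))| \le e^{\|K(z)\|_{S_1}} \les e^{(\la z\ra + \log\la a\ra)^d}$, which is the claim.

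I would organize the write-up as: (i) a sub-lemma computing the iterated kernel $(VR_0(z)\chi)^{d+1}(x,y)$ and showing it is continuous (hence the operator is trace class, with $\|\cdot\|_{S_1}$ controlled by a constant times the $L^\infty$ norm of the kernel on the fixed compact support, by Mercer-type estimates on a bounded domain); (ii) an elementary estimate, valid on the sheet $|\arg z|\le a$, of the form $|\,e^{\pm izr}\tilde E_i(\mp izr)\,| \les e^{C|z|}\,(\,|z| + |a| + 1\,)$ uniformly for $r$ in the compact set, obtained by splitting into $|zr|\lesssim 1$ (use the series) and $|zr|\gtrsim 1$ (use the asymptotic $E_i(\sigma) \sim e^\sigma/\sigma$, giving cancellation of the exponential) and noting $\log(\mp izr) = \log|zr| + i(\arg z \mp \pi/2 + 2\pi m)$ with $|m| \les a$; (iii) assembling the $(d+1)$-fold product via the triangle inequality on the finitely many terms and submultiplicativity, then the determinant bound. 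The main obstacle, and the step I would spend the most care on, is (ii) combined with the bookkeeping in (iii): getting the power of $\log\la a\ra$ down to exactly $d$ (rather than something like $d(d+1)/2$) requires using that the product of $d+1$ resolvent kernels, after performing the $d$ intermediate integrations over the compact set, gains enough decay/smoothing that at most $d$ of the logarithmic factors survive without being paired against a convergence-producing power—equivalently, one uses that $\la z\ra^{-s_k}$-type weights and the iterated convolutions reduce the effective number of "free" $\log$ factors. If a clean combinatorial argument for "exactly $d$" is elusive, the fallback is that any fixed power of $\log\la a\ra$ is acceptable up to adjusting constants only if the exponent is $\le d$, so one genuinely must exploit the convolution smoothing; I would make this precise by estimating $\|\,|x-y|^{-(d-1-j)}(\log|x-y| + c)^{k}\,\|$ as a kernel on the compact set and noting the number of logarithmic factors is capped by the number of resolvent factors whose singular part is "active," which is at most $d$ by the support and integrability constraints.
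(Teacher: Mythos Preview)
Your approach has a genuine gap at the step where you pass from a trace-norm bound on $K(z)=(VR_0(z)\chi)^{d+1}$ to the determinant bound. The inequality $|\det(I-K)|\le e^{\|K\|_{S_1}}$ is too crude here: the kernel of each factor $VR_0(z)\chi$ contains $e^{\pm izr}$ with $r$ ranging over a fixed compact set, so for $z$ off the real axis (which is exactly the regime of interest once $|\arg z|$ is large) the pointwise size of the iterated kernel is of order $e^{C\la z\ra}$, and any reasonable $S_1$ estimate on $K$ (Mercer-type or otherwise) inherits this exponential factor. Hence $\|K\|_{S_1}\gtrsim e^{C\la z\ra}$ in general, and the bound you obtain is $|H(z)|\les e^{Ce^{C\la z\ra}}$, which is doubly exponential and far from the claim. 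The sentence where you ``absorb the exponential'' into $(\la z\ra+\log\la a\ra)^d$ is simply false: $e^{C\la z\ra}$ cannot be dominated by any polynomial in $\la z\ra$.

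The paper avoids this by working with individual singular values and the product formula $|\det(I-K)|\le\prod_j(1+s_j(K))$ rather than the trace-norm bound. The key structural input is the decomposition of $\chi R_0(z_m)\chi$ (with $z_m=e^{2\pi i m}z_0$, $z_0$ on the principal sheet) into a sum of Sobolev-smoothing pieces $B_j$ with $\|B_j\|_{L^2\to H^j}\les\la z_0\ra^{j-1}$, plus a \emph{finite-rank} piece $m\,z_0^{d-1}\,\mathcal{E}_\chi^*(\bar z_0)\mathcal{E}_\chi(z_0)$ coming from the change of sheet. Weyl asymptotics on the compact support handle the $B_j$ terms, while the finite-rank piece is controlled by the Vodev trick of inserting powers of $(-\Delta+I)_{\mathbb{S}^{d-1}}$ and optimizing. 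This yields that only about $(\la z_0\ra+\log\la m\ra)^{d-1}$ of the singular values $s_j((VR_0\chi)^{d+1})$ are large (of size at most $e^{\la z_0\ra+\log\la m\ra}$), and the rest decay like $j^{-(d+1)/d}$. Taking logarithms, the large block contributes at most $(\la z_0\ra+\log\la m\ra)^{d-1}\cdot(\la z_0\ra+\log\la m\ra)=(\la z_0\ra+\log\la m\ra)^d$, and the tail is harmless. Note in particular that the $\log\la a\ra$ in the final bound does \emph{not} arise from counting logarithmic factors of $\tilde E_i$ in the kernel as you propose; it enters as $\log\la m\ra$ with $m\sim a$ through the sheet-change identity \eqref{zm}. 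Your bookkeeping for ``how many $\log$'s survive the convolutions'' is therefore aimed at the wrong mechanism.
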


 



We start with the following lemma. 
 \begin{lemma}Let $D^{+}:= \{ z \in \Lambda: \arg z \in (0 , \pi] \} $. Then for any $z_0 \in D^{+}$, we have
   \begin{align} \label{Hs0}
  R_0(z_0) = \sum_{j=1}^{\frac{d-1}{2}} B_j(r, z_0) 
\end{align}
where  $\| B_j(r, z_0) \|_{L^2 \rightarrow H^j} \les \la z_0 \ra^{j-1} $. 
\end{lemma}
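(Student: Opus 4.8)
The plan is to start from the explicit kernel expansion \eqref{compactexp} for $R_0(z_0)$ with $r = |x-y|$ and group its terms according to the power of $1/r$ that appears, since each such power dictates which Sobolev space the corresponding convolution operator lands in. Writing $j = 1, \dots, \frac{d-1}{2}$, the summands of \eqref{compactexp} carrying the factor $r^{-(d-j)}$ (the polynomial-in-$z$ piece) and the factor $r^{-(d-1-j)}$ (the three pieces involving $e^{\pm iz r}\tilde E_i(\mp iz r)$ and $e^{izr}$) should be collected into a single operator $B_j(r,z_0)$. The point of the regrouping is that convolution against $|x|^{-(d-j)}$, localized by the compact support of $\chi$ and $V$, is (by the mapping properties of Riesz-type potentials, e.g. as cited via \cite{Jen}) bounded from $L^2_c$ into $H^j_{\mathrm{loc}}$: removing $j$ powers of $|x|^{-d}$ near the diagonal gains $j$ derivatives in $L^2$. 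Thus each $B_j$ is the piece responsible for exactly $H^j$ regularity, which is why the lemma decomposes $R_0(z_0)$ into $\frac{d-1}{2}$ such blocks.

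Next I would track the $z_0$-dependence of each $B_j(r,z_0)$. The naive coefficients in \eqref{compactexp} are $z_0^{j-1}$ for the polynomial term and $(\pm iz_0)^j$ for the $\tilde E_i$ and plane-wave terms; on the face of it the latter give $\la z_0\ra^j$, one power too many. The resolution is that the three terms of order $r^{-(d-1-j)}$ with coefficient $\sim z_0^j$ actually combine with a partial cancellation: near $r = 0$ the combination $e^{izr}\tilde E_i(-izr) + e^{-izr}\tilde E_i(izr) + 2\pi i e^{izr}$ (and its $s$-derivatives, evaluated at $s=r$) is, up to the logarithm $\log(z_0 r)$, regular and in fact the $\log$ contributes the borderline $r^{-(d-1-j)}\log r$ kernel, whose $L^2_c \to H^j$ norm one bounds by $C$ uniformly, while the smooth remainder $\sum_{k\ge 1}(\mp iz_0 r)^k/(k\,k!)$ carries an extra power of $r$, i.e. effectively $r^{-(d-2-j)}$, which maps $L^2_c \to H^j$ with a bound that absorbs one factor of $|z_0|$. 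Put together, every piece of $B_j$ is bounded $L^2 \to H^j$ by $C\la z_0\ra^{j-1}$. One must of course also control the behavior for large $|z_0 r|$, but there $\chi, V$ confine $r$ to a fixed compact set, so $|z_0 r|\lesssim |z_0|$ and the worst growth is the stated $\la z_0\ra^{j-1}$ coming from the polynomial term (plus an $e^{|z_0|}$ from the plane waves, which is consistent with \eqref{L2bound} but is not what's claimed here — so I would be careful: the claim $\la z_0\ra^{j-1}$ must be restricted appropriately, presumably using $z_0 \in D^+$ so that $\Im(z_0 r) \geq 0$ makes $e^{iz_0 r}$ and the $\tilde E_i$'s bounded, killing the exponential).

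Concretely the key steps, in order: (i) substitute \eqref{compactexp}, split the sum into the four families indexed by $j$; (ii) for the $e^{\pm i z_0 r}$ plane-wave term and the $e^{\pm iz_0 r}\tilde E_i(\mp iz_0 r)$ terms, use $z_0 \in D^+$, i.e. $0 < \arg z_0 \le \pi$, together with the branch structure of $\tilde E_i$ fixed in Lemma~\ref{lem:mapping}, to see these are bounded functions of $z_0 r$ on the relevant compact $r$-range; (iii) isolate the logarithmic part of $\tilde E_i$, which yields a kernel $\sim r^{-(d-1-j)}\log r$, and bound its convolution operator $L^2_c \to H^j$ by an absolute constant via the Riesz-potential mapping properties; (iv) bound the entire-series remainder in $\tilde E_i$, gaining one power of $r$ hence one power of $|z_0|$ to spare, so that term is $O(\la z_0\ra^{j-1})$ — actually $O(\la z_0\ra^{j})$ naively but with the gained $r$ it is fine; (v) bound the polynomial term $z_0^{j-1} r^{-(d-j)}$ directly, obtaining exactly $\la z_0\ra^{j-1}$; (vi) collect. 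The main obstacle I anticipate is step (ii)–(iii): making the claimed \emph{polynomial} bound $\la z_0\ra^{j-1}$ stick rather than the exponential $e^{|z_0|}$ of \eqref{L2bound}. This forces genuine use of the half-space restriction $\arg z_0 \in (0,\pi]$ (so the relevant exponentials do not blow up) and a careful accounting of which power of $z_0$ each of the four kernel families really contributes after the $\tilde E_i$-cancellation; the bookkeeping of the $s$-derivatives $\partial_s^{s_k}$ hitting products of $e^{\pm izs}$ and $\tilde E_i(\mp izs)$, each differentiation potentially producing another factor of $z_0$, is the place where an off-by-one in the power of $z_0$ is easiest to make and hardest to repair.
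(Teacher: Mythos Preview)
Your plan correctly singles out the restriction $z_0\in D^{+}$ as the reason one can hope for polynomial rather than exponential bounds, and you are right that the bookkeeping of the $\partial_s^{s_k}$ derivatives is where the difficulty lies. But the mechanism you propose for the $\tilde E_i$ terms does not work, and the missing idea is exactly what the paper supplies. In your step (ii) you want $z_0\in D^{+}$ to make ``$e^{iz_0 r}$ and the $\tilde E_i$'s bounded.'' For $\Im z_0>0$ the factor $e^{-iz_0 r}$ grows like $e^{r\Im z_0}$, and the power series $\sum_{k\ge1}(i z_0 r)^k/(k\,k!)$ inside $\tilde E_i(i z_0 r)$ that you invoke in step (iv) grows like $e^{|z_0|r}$; neither is bounded uniformly in $z_0$, so splitting $\tilde E_i$ into $\log$ plus entire remainder is only useful for $|z_0 r|\lesssim1$. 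Even where the product $e^{\mp iz_0 r}\tilde E_i(\pm iz_0 r)$ happens to be bounded, mere boundedness is not enough: every $\partial_s$ hitting it naively produces a factor $z_0$, and that is precisely the ``off-by-one'' you could not repair.

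The paper's proof resolves this by not using the series globally. For $|z_0 s|\gtrsim1$ it uses the integral representation \eqref{intrep} to write $E_i(\pm i z_0 s)=e^{\pm i z_0 s}\,a(\pm z_0 s)$ with $a$ a classical symbol of order $-1$, i.e.\ $|\partial^j a(\sigma)|\lesssim|\sigma|^{-1-j}$. The exponential in $E_i$ then \emph{cancels} the prefactor, so $e^{\mp iz_0 s}\tilde E_i(\pm iz_0 s)=a(\pm z_0 s)$, and now $\partial_s^{s_k}$ applied to this gives $|z_0|^{s_k}\,|a^{(s_k)}(\pm z_0 s)|\lesssim |z_0|^{s_k}|z_0 s|^{-1-s_k}\lesssim s^{-s_k}$ with \emph{no} residual power of $z_0$. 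Combined with the small-$|z_0 s|$ bound (where your $\log$ argument is essentially what the paper does under the cutoff $h(|z_0|s)$), this yields $|\partial_s^{s_k}\{e^{\mp iz_0 s}\tilde E_i(\pm iz_0 s)\}|\lesssim s^{-s_k}$ uniformly, and hence \eqref{R00}. The only place where $D^{+}$ is used in the way you suggest is for the pure plane-wave piece $2\pi i\,e^{iz_0 s}$, where $|\partial_s^{s_k}e^{iz_0 s}|\le|z_0|^{s_k}$ because $|e^{iz_0 s}|\le1$; that term indeed contributes the $\langle z_0\rangle^{s_k}/r^{d-1-s_k}$ summand. Without the symbol representation \eqref{nonegb} for the $E_i$ pieces, however, your argument is stuck at the exponential bound \eqref{L2bound}.
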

\begin{proof}
We first show the statement when $ \arg z_0 \in (0, \pi/2) $, where we originally define the resolvent operator $R_0(z)$.  Recall by \eqref{E1int}, we have for $ \arg z_0 \in (0, \pi/2) $
\begin{align}\label{intrep}  E_i (\pm i \sigma) =  \int_{-\infty}^{\pm i\sigma}  \frac{e^{p}}{p} dp 
\end{align}
 which can be represented as 
\begin{align} \label{nonegb}
  E_i( \pm i \sigma)  = e^{\pm i \sigma} a(\pm \sigma) , \,\,\,\,   
  \end{align}
where $ |\partial^j a(\pm \sigma)| \les |\sigma|^{-1-j}$, $ j =0,1,..., \frac{d-3}{2}$ for $|\sigma | \gtrsim 1$. Using \eqref{nonegb}, we conclude that if $ \arg z_0 \in (0, \pi/2)$, then  
 \begin{align} \label{Eeasy}
  \tilde{E}_i (\pm iz_0s) =  E_i(\pm iz_0s) = h ( |z_0|s) \log |z_0s| + e^{\pm iz_0s} (1-h( |z_0|s) )a(\pm iz_0s) 
\end{align}
where $h(z)$ is a cutoff function supported in  $|z| < 1$. One can compute that for any $s_k \geq 1$ 
\begin{align} \label{Easyder}
&|\partial_s^{s_k} \{ h( |z_0|s) e^{iz_0s} \log |z_0s|\} | \les \sum_{ m+p = s_k}(|z_0|^m s^{-p}) h( |z_0|s)  \les  s^{-s_k}  \\
& |\partial_s^{s_k} \{ ( 1 - h(|z_0|s)) e^{\pm iz_0s} \tilde{E}_i (\pm iz_0s)\} |=  |\partial_s^{s_k} \{( 1 - h(|z_0|s))  a(\pm iz_0s)\} |\les s^{-s_k} .\nn
 \end{align} 
Using \eqref{Easyder} in the expansion \eqref{R0main} for $R_0(z)$,  we obtain 
\begin{align}
\label{R00} |R_0(z_0) (r)| \les \frac{1}{r^{d-1}} + \sum_{k=0}^{\frac{d-3}{2}}  \frac { \la z \ra^{s_k}}{ r^{d-1-s_k}}   
\end{align}
as for $ z $ with $ \Im z \geq 0$ one has $ | \partial_s^{s_k} \{e^{izs}\} | \les \la z \ra^{s_k} $, for any nonnegative real number $s$. That implies \eqref{Hs1} for $ \arg z \in (0, \pi/2)$. 

We next consider when $ \arg z_0 \in (\pi/2, \pi ]$. We show that the extension of  $E_i ( \pm i \sigma)$ at $ z_0 r$ agree with the integral in \eqref{intrep} in this domain. Once we have that, the inequalities in \eqref{Eeasy} and \eqref{Easyder} are valid for $ e^{\pm iz_0s} \tilde{E}_i ( \pm iz_0s)$, and therefore the statement follows by \eqref{R00}.  We show that $E_i(iz_0r)$ agrees with the integral in \eqref{intrep}. The proof for $E_i(-iz_0r)$ follows similarly. 
\begin{align} \label{calc}
\ln (-iz_0r)  + \sum_{k=1}^{\infty}\frac{ (iz_0r)^k}{ k k!} &=  \ln (-iz_0r)  + \int_{0}^{iz_0r} \frac{ e^{p} -1}{p} dp \\
&= \ln (-iz_0r)+  \int_{-1}^{ iz_0r} \frac{e^p -1}{p} dp + \int_{0}^{-1} \frac{e^p -1}{p} dp  \nn  \\
& = \int_{-1}^{iz_0r} \frac{e^p}{p} dp + \int_{-\infty}^{-1}  \frac{e^p}{p} dp - \int_{-\infty}^{-1}  \frac{e^p}{p} dp + \int_{0}^{-1} \frac{e^p -1}{p} dp \nn \\ 
 & = \int_{-\infty}^{iz_0r} \frac{e^p}{p} dp - \gamma = e^{iz_0r} a(i z_0r ) - \gamma \nn
\end{align}
In the last equality we used 
$$   \int_{-\infty}^{-1}  \frac{e^p}{p} dp - \int_{0}^{-1} \frac{e^p -1}{p} dp= \int_{0}^{1} \frac{ 1- e^{-p} - e^{-1/p}}{p} dp =\gamma. $$

Finally, the calculation  \eqref{calc} is also valid for $E_i(iz_0r)$ when $\arg z_0 = \pi/2$. Moreoer, $e^{ iz_0 s}$ has exponential decay when $\arg z_0 = \pi/2$, we have 
\begin{align}\label{negb}
e^{  iz_0 s } E_i( -iz_0 s) = e^{iz_0 s}( \gamma + \log (iz_0 s) + \sum_{n=1}^{\infty}  \frac{(-iz_0 s)^{n} }{n n!} ) = b(z_0s)
\end{align} 
where $ |\partial^j  b(\sigma)| \les |\sigma|^{-1-j}$, $ j =0,1,..., \frac{d-3}{2}$ for $|\sigma| \gtrsim 1$. Using \eqref{negb}, we obtain 
\begin{align} 
 e^{iz_0s} \tilde{E}_i (- iz_0s) =  e^{iz_0s}  h ( |z_0|s) \log |z_0s| + (1-h( |z_0|s) )b(\pm iz_0s). 
\end{align}
Therefore, the bound $|\partial_s^{s_k} \{ e^{  iz_0 s } E_i( -iz_0 s)\} |\les s^{-s_k}$ is valid, and so does \eqref{R00} when $\arg z_0 = \pi/2$. 
\end{proof}
 \begin{lemma}Let $D^{-}:= \{ z \in \Lambda: \arg z \in (\pi, 2\pi] \} $. Then for any $z_0 \in D^{-}$, we have
   \begin{align} \label{Hs1}
  \chi R_0(z_0) \chi = \sum_{j=1}^{\frac{d-1}{2}} \tilde{B}_j(r, -z_0 ) +2 i\Big( \frac{z_0} { 2 \pi } \Big)^{d-1} \mathcal{E}^*_{\chi}(- \bar{z_0})  \mathcal{E}_{\chi}( -z_0) 
\end{align}
where  $\| \tilde{B}_j(r, z) \|_{L^2 \rightarrow H^j} \les \la z \ra^{j-1} $. Here, 
$$ \mathcal{E}_\chi (z):= \int_{ \R^d} e^{ i z x \cdot \omega} \chi(x) f(x) dx.  $$ 
\end{lemma}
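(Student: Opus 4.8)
The plan is to reduce to the preceding lemma, which handles $D^{+}$, by relating $\chi R_0(z_0)\chi$ for $z_0\in D^{-}$ to $\chi R_0(-z_0)\chi$, where $-z_0$ denotes the point of $\Lambda$ with $\arg(-z_0)=\arg z_0-\pi\in(0,\pi]$, so that $-z_0\in D^{+}$. The heart of the matter is the kernel identity
$$
 R_0(z_0)(x,y)-R_0(-z_0)(x,y)=2i\Big(\tfrac{z_0}{2\pi}\Big)^{d-1}\int_{\mathbb{S}^{d-1}}e^{\,iz_0(x-y)\cdot\omega}\,d\omega ,
$$
which I would extract from the $\Lambda$-expansion \eqref{R0mainext}--\eqref{R0main}. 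In that expansion only the factors $\tilde E_i(\mp iz s)$ are multivalued, and the branch at a given $z$ is fixed by continuation from the initial quadrant $\{\Re z>0,\ \Im z>0\}$. Setting $w:=-z_0$ and using $e^{\pm iz_0 s}=e^{\mp iw s}$, one tracks how the two arguments $\mp iz s$ wind across the cut $[0,\infty)$ of $E_i$ as $z$ is rotated from that quadrant around into $D^{-}$; by \eqref{E1} each crossing contributes an additive $2\pi i$, and the count gives $\tilde E_i^{[z_0]}(\mp iz_0 s)=\tilde E_i^{[w]}(\pm iw s)\pm 2\pi i$ relative to the branches natural to $w\in D^{+}$. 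Inserted back into \eqref{R0main}, the $E_i$-dependent part of $R_0(z_0)$ becomes exactly that of $R_0(-z_0)$, and the difference collapses to the finite elementary combination $\sum_k \beta_k\, z_0^{s_k} r^{-(d-1-s_k)}\big(e^{iz_0 r}\pm e^{-iz_0 r}\big)$ produced by the monodromy shifts together with the term $2\pi i\,e^{iz_0 s}$ already present in \eqref{R0main}. Since $d$ is odd the exponents $\tfrac{d-1}{2}+k$ in \eqref{osc} are integers, so the right-hand side of \eqref{osc} is single-valued in its radial variable; replacing that variable by $z_0$ and matching coefficients — via the relation between the $\tilde c_k$ and the $c_k$ built into \eqref{highR0} and the constants $c_k$ of \eqref{osc} — turns that finite combination into precisely the right-hand side of the displayed identity.

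With the identity in hand the lemma is immediate: since $-z_0\in D^{+}$, the $D^{+}$ lemma \eqref{Hs0}, applied at $-z_0$, gives $R_0(-z_0)=\sum_{j=1}^{(d-1)/2}\tilde B_j(r,-z_0)$ with $\|\tilde B_j(r,z)\|_{L^2\to H^j}\les\la z\ra^{j-1}$; and by the definition of $\mathcal E_\chi$ the kernel of $\mathcal E^{*}_\chi(-\bar{z_0})\mathcal E_\chi(-z_0)$ is precisely $\chi(x)\big(\int_{\mathbb{S}^{d-1}}e^{iz_0(x-y)\cdot\omega}\,d\omega\big)\chi(y)$, so compressing the identity by $\chi$ on both sides yields \eqref{Hs1}. (Alternatively one may obtain the displayed identity operator-theoretically from $R_0(z_0)-R_0(-z_0)=2z_0(-\Delta-z_0^2)^{-1}$ together with the classical odd-dimensional expansion of $(-\Delta-z_0^2)^{-1}$ continued across its spectral cut, but the route through \eqref{R0main} is self-contained given what has already been developed.)

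The main obstacle is the winding/branch bookkeeping of the first step: one must verify, uniformly in $z_0$ over all of $D^{-}$, that along the continuation path from the initial quadrant each of $\mp iz s$ picks up exactly one crossing of the cut of $E_i$, with the correct orientation, so that the shifts are $\pm 2\pi i$ and not $0$ or $\pm 4\pi i$ — an error here would alter, or destroy, the $\mathcal E^{*}_\chi\mathcal E_\chi$ structure. The secondary technical point is the constant chase at the end, i.e. confirming that the monodromy leftovers assemble into \emph{exactly} $2i(z_0/2\pi)^{d-1}$ times the spherical average rather than merely some scalar multiple of it; this is in fact forced, since everything in sight is the analytic continuation of the single explicit integral \eqref{epsilon}.
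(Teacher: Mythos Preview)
Your strategy is exactly the paper's: rotate $z_0\in D^-$ to $w=-z_0\in D^+$ via $z=e^{-i\pi}z_0$, track the monodromy of the two $\tilde E_i$ factors, and recognize the leftover as the spherical average via \eqref{osc}. However, the branch bookkeeping you flagged as the main obstacle is in fact miscounted. Writing $w=-z_0$, one has $\arg(iz_0)=\arg(-iw)+2\pi$ while $\arg(-iz_0)=\arg(iw)$, so
\[
\tilde E_i(-iz_0 s)=E_i(iws)+2\pi i,\qquad \tilde E_i(iz_0 s)=E_i(-iws)\quad(\text{no shift}),
\]
not $\pm 2\pi i$ in both. Plugging this into \eqref{R0main} gives, inside the braces, $e^{iws}E_i(-iws)+e^{-iws}E_i(iws)+4\pi i\,e^{-iws}$, and hence
\[
R_0(z_0)-R_0(-z_0)\ \propto\ \partial_s^{s_k}\bigl\{4\pi i\,e^{-iws}-2\pi i\,e^{iws}\bigr\},
\]
which is \emph{not} proportional to $e^{iws}+e^{-iws}$. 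Thus your clean kernel identity $R_0(z_0)-R_0(-z_0)=2i(z_0/2\pi)^{d-1}\int_{\mathbb S^{d-1}}e^{iz_0(x-y)\cdot\omega}\,d\omega$ is false as stated.

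The fix is the paper's move: rewrite $4\pi i\,e^{-iws}=-4\pi i\,e^{iws}+4\pi i(e^{iws}+e^{-iws})$ and absorb the $-4\pi i\,e^{iws}$ into the ``good'' part. That part is then no longer $R_0(-z_0)$ on the nose, but it differs from $R_0(-z_0)$ only by a multiple of $\partial_s^{s_k}e^{iws}$, which for $w\in D^+$ obeys $|\partial_s^{s_k}e^{iws}|\le\langle w\rangle^{s_k}$ and hence the same $L^2\to H^j$ bounds as $B_j(r,w)$. This corrected remainder is what the paper calls $\tilde B_j(r,-z_0)$. The symmetric piece $4\pi i(e^{iws}+e^{-iws})$ is then identified with the spherical integral via \eqref{osc}, giving exactly the $2i(z_0/2\pi)^{d-1}\mathcal E^*_\chi\mathcal E_\chi$ term. (Your alternative route through $2z_0(-\Delta-z_0^2)^{-1}$ runs into the same issue: once continued to $D^-$, the Schr\"odinger resolvent at $z_0^2$ is on its second sheet and contributes an extra Stone-type term, so the difference is again not the bare spherical average.)
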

\begin{proof}
Let $ z =  e^{-i \pi} z_0$. Then $ \log ( iz_0 s) =  \log (- i zs) + 2 \pi i $, and $ \log(- i z_0s) =  \log (i z s) $. Hence, 
  \begin{align}
e^{-iz_0s} \tilde{E}_i(iz_0s) = e^{i z s} E_i (-i z s), \,\,\,\,\ e^{iz_0s} \tilde{E}_i(-iz_0s) = e^{-izs} E_i(iz s) + e^{-iz s}  2 \pi i 
  \end{align}
 Therefore, 
\begin{align} \label{genR}
R_0(z_0)(r) &= \frac{1}{ ( 2 \pi)^d}    \sum_{k=0}^{\frac{d-3}{2}}  \frac{\tilde{c_k}}{ r^{d-1-s_k}}  \partial_s^{s_k} \Big\{  e^{izs} E_i( -izs) + e^{-izs} E_i( i zs) + 4 \pi i e^{-iz s}   \Big\}\Big|_{(s=|r|)}     \\ 
&= \frac{1}{ ( 2 \pi)^d}  \sum_{k=0}^{\frac{d-3}{2}}  \frac{\tilde{c_k}}{ r^{d-1-s_k}}  \partial_s^{s_k} \Big\{  e^{izs} E_i( -izs) + e^{-izs} E_i( i zs) -4 \pi i  e^{izs} \Big\}\Big|_{(s=|r|)}   \nn  \\ 
&+  \frac{2i}{ (2 \pi)^{d-1}}  \sum_{k=0}^{\frac{d-3}{2}}  \frac{\tilde{c_k}}{ r^{d-1-s_k}}  \partial_s^{s_k} \{ e^{izs}+ e^{-izs}\}\big|_{(s=|r|)} \nn
  \end{align}
 Note that $ \arg z \in (0, \pi]$. Therefore,  the term in the second line in \eqref{genR} holds an estimate in the form of  \eqref{R00}. Moreover, by \eqref{osc}, we have 
\begin{align}
\sum_{k=0}^{\frac{d-3}{2}}  \frac{\tilde{c_k}}{ |x-y|^{d-1-s_k}} \partial_s^{s_k} \{ e^{iz s} + e^{-iz s} \}\Big|_{(s=|x-y|)} =z^{d-1}  \int_{\mathbb S^{d-1}}  e^{i z (x-y) \cdot \omega}  d \omega 
\end{align}
This finishes the proof. 
\end{proof}

We next estimate $ \chi R_0(z) \chi$ for all $ z \in \Lambda$. Let $z_0 \in D^+\cup D^{-} / \{0\}$, and  $z_m := e^{ 2 \pi i m} z_0$, $m \in \mathbb{Z}$.  Then we have 
\begin{align*}
[e^{iz_1s} \tilde{E}_i(-iz_1s) + e^{-iz_1s} \tilde{E}_i(iz_1s)] - [e^{iz_0s} \tilde{E}_i(iz_0s) + e^{-iz_0s} \tilde{E}_i(-iz_0s)] =  2 \pi i  [ e^{iz_0s} + e^{-iz_0s} ] 
\end{align*}
pointwise. Therefore, by \eqref{osc}, \eqref{Hs0} and \eqref{Hs1} we obtain
\begin{align*}
R_0(z_1) (x,y) - R_0(z_0) (x,y) =  2 \pi i  \widehat{ \sigma_{z_0 \mathbb{S}^{d-1}}} (r)
\end{align*}
as an operator $L^2_c \to L^2_{\text{loc}}$. 
More generally, we have 
\begin{align} \label{zm}
R_0(z_m)(x,y) - R_0(z_0)(x,y)  &=   2 \pi i  m\widehat{ \sigma_{ z_0 \mathbb{S}^{d-1}}} ( r)
\end{align}
  Equality \eqref{zm} results in the following corollary. The bound $ \eqref{Hexp}$ will follow from \eqref{genR}, \eqref{Eeasy}, and \eqref{zm}. 

 \begin{corollary} Let $z_m = e^{2 \pi i m} z_0$. Then one has for $ z_0 \in D^{+}$
  \begin{align} 
 \chi R_0(z_m) \chi = \sum_{j=1}^{\frac{d-1}{2}} B_j(r, z_0 ) +  i m \Big( \frac{z_0} { 2 \pi } \Big)^{d-1}   \mathcal{E}^*_{\chi}( \bar{z_0} ) \mathcal{E}_{\chi}( z_0)  \label{Hs} 
\end{align}
and for $ z_0 \in D^{-}$
   \begin{align}  \label{Hss} 
 \chi R_0(z_m) \chi = \sum_{j=1}^{\frac{d-1}{2}} \tilde{B}_j(r, -z_0 )  + 2 i\Big( \frac{z_0} { 2 \pi } \Big)^{d-1} \mathcal{E}^*_{\chi}( i\bar{z_0})  \mathcal{E}_{\chi}( -i\bar{z_0}) +   i m \Big( \frac{z_0} { 2 \pi } \Big)^{d-1}   \mathcal{E}^*_{\chi}(z_0 ) \mathcal{E}_{\chi}( \bar{z_0} )
\end{align}
where $\| \tilde{B}_j(r, z_0) \|_{L^2 \rightarrow H^j}, \| B_j(r, z_0) \|_{L^2 \rightarrow H^j} \les \la z \ra^{j-1}$. Moreover, we have 
\begin{align} \label{Hexp}
\| \chi R_0(z_m) \chi \|_{ L^2 \rightarrow H^1} \les \la m \ra e^{ \la z_0 \ra}, 
\end{align} 
\end{corollary}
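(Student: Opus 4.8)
The strategy is to feed the sheet-shift identity \eqref{zm} into the two structural lemmas \eqref{Hs0} and \eqref{Hs1}, and then estimate the resulting operator term by term. Fix $z_0\in(D^+\cup D^-)\setminus\{0\}$ and $z_m=e^{2\pi i m}z_0$. By \eqref{zm}, as operators $L^2_c\to L^2_{\mathrm{loc}}$ one has $R_0(z_m)(x,y)=R_0(z_0)(x,y)+2\pi i m\,\widehat{\sigma_{z_0\mathbb S^{d-1}}}(x-y)$. Conjugating by $\chi$ and inserting \eqref{Hs0} when $z_0\in D^+$, or \eqref{Hs1} when $z_0\in D^-$, for the leading term $\chi R_0(z_0)\chi$ immediately yields the $B_j$ (resp.\ $\tilde B_j$) sums and, in the $D^-$ case, the fixed $2i(z_0/2\pi)^{d-1}$ background term of \eqref{Hss}. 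What remains is to identify $\chi(x)\chi(y)\,\widehat{\sigma_{z_0\mathbb S^{d-1}}}(x-y)$ with an operator of the form $c\,z_0^{d-1}\,\mathcal E^*_\chi(\cdot)\mathcal E_\chi(\cdot)$ and to match the spectral parameters.

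This last identification is exactly the computation already made at the end of the proof of \eqref{Hs1}: by \eqref{osc}, $\sum_k \frac{\tilde c_k}{|x-y|^{d-1-s_k}}\,\partial_s^{s_k}\{e^{izs}+e^{-izs}\}\big|_{s=|x-y|}=z^{d-1}\int_{\mathbb S^{d-1}}e^{iz(x-y)\cdot\omega}\,d\omega$, and the sphere integral factors over $\omega$ as $(\chi(x)e^{izx\cdot\omega})\,(\chi(y)e^{-izy\cdot\omega})$, i.e.\ as the kernel of $\mathcal E^*_\chi\mathcal E_\chi$. The precise arguments $\bar z_0$, $-z_0$, $i\bar z_0$, $-i\bar z_0$ occurring in \eqref{Hs}--\eqref{Hss} are forced by which branch of $\log$ each $e^{\mp iz_0 s}\tilde E_i(\pm iz_0 s)$ lives on (tracked exactly as in \eqref{Eeasy}, \eqref{negb}, \eqref{genR}) together with the elementary relation $\overline{\mathcal E_\chi(z)f}=\mathcal E_\chi(-\bar z)\bar f$. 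Collecting terms gives \eqref{Hs} and \eqref{Hss}.

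\textbf{The bound \eqref{Hexp}.} I estimate each summand in $\|\cdot\|_{L^2\to H^1}$. For the $B_j$ and $\tilde B_j$ pieces, \eqref{Hs0}--\eqref{Hs1} give $\|B_j\|_{L^2\to H^j}\les\la z_0\ra^{j-1}$ with $1\le j\le\frac{d-1}{2}$; since $H^j\hookrightarrow H^1$, their contribution is $\les\la z_0\ra^{(d-3)/2}$, which is dominated by $e^{\la z_0\ra}$. For the $\mathcal E_\chi$ pieces, $\mathcal E_\chi(w):L^2\to L^2(\mathbb S^{d-1})$ has integral kernel $\chi(x)e^{iwx\cdot\omega}$; because $\mathrm{supp}\,\chi$ is compact, its Hilbert--Schmidt norm is $\les e^{C|w|}$, and differentiating the kernel of $\mathcal E^*_\chi(w')$ in $x$ costs only a factor $1+|w'|$, so $\|\mathcal E^*_\chi(w')\mathcal E_\chi(w)\|_{L^2\to H^1}\les(1+|z_0|)\,e^{C|z_0|}$ for each of the finitely many $w,w'\in\{\pm z_0,\pm\bar z_0,\pm i\bar z_0\}$ that appear. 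Multiplying by the prefactor $|z_0/2\pi|^{d-1}$ and by $|m|$ (or $2$), and absorbing all polynomial powers of $\la z_0\ra$ into $e^{\la z_0\ra}$, gives $\|\chi R_0(z_m)\chi\|_{L^2\to H^1}\les\la m\ra\,e^{\la z_0\ra}$.

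\textbf{Main obstacle.} The delicate point is the branch bookkeeping of the first two paragraphs: for $z_0$ on an arbitrary sheet, and after the $2\pi im$ shift, one must keep straight which argument decorates each $\mathcal E_\chi$ factor and verify that the $m$-independent terms generated by $e^{iz_0 s}+e^{-iz_0 s}$ mesh with those already built into \eqref{Hs1}. The analytic estimates are routine once the compact support of $\chi$ is used; the constant $C$ in the exponent depends only on $\mathrm{supp}\,\chi$, which is harmless as $\chi$ is fixed, and in any case is absorbed into the overall constant when \eqref{Hexp} is later fed into Proposition~\ref{detest}.
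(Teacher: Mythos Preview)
Your proposal is correct and follows essentially the same route as the paper: the paper's proof of the corollary is a one-line reference to \eqref{zm} together with the structural lemmas \eqref{Hs0}--\eqref{Hs1} (and, for \eqref{Hexp}, to \eqref{genR}, \eqref{Eeasy}, \eqref{zm}), and you have simply spelled out how those ingredients combine. Your Hilbert--Schmidt estimate for the $\mathcal E_\chi$ blocks is a clean way to package what the paper leaves implicit in invoking \eqref{Eeasy} and compact support of $\chi$; the branch bookkeeping you flag as the main obstacle is exactly what the paper handles via \eqref{genR} and the discussion preceding \eqref{zm}.
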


We are now ready to prove Proposition~\ref{detest}.

\begin{proof}[Proof of Proposition~\ref{detest}]
To prove the statement we first show that $(V R_0 \chi)^{d+1}$ is in trace class. Once we show that, we estimate  $ |H(\lambda)|$ using  
$$ |H(\lambda)| \leq \Pi_{j} s_j (( V R_0 \chi)^{d+1}) $$ 
for $ \lambda \geq 1$, where $s_j$'s are the singular values of $ ( V R_0 \chi)^{d+1}$. Let $A,B$ compact and $C : H_1 \rightarrow H_2$ be bounded operator, then we have the following two properties for $k,j \geq 0$, see Theorem~1.6 and  Theorem~1.7 in \cite{BS}.
\begin{enumerate}
\item[i)] $s_{k+j+1} ( A+B) \leq s_{k+1}(A) + s_{j+1}(B) $
\item[ii)] $s_j(AC) \leq s_j(A) \| C\|_{ H_1 \rightarrow H_2}$
\end{enumerate} 
We start proving that $(V R_0 \chi)^{d+1}$ is in trace class. We first note that $(V R_0 \chi)^{d+1}$ is compactly supported in a ball of $ \overline {B(0, R)}$ for some $0<R<\infty$. Therefore, we have for any $ \lambda = z e^{ 2m\pi i}$ such that  $ \arg z \in (0, 2\pi]$
\begin{multline}
s_j(   VR_0(\lambda)  \chi ) \leq \| V\|_{L^\infty} s_j ( ( -\Delta +I)_{{B}}^{-\f 12}  ( - \Delta + I)^{\f12}_{B} \chi R_0 \chi ) \\ \les \| V\|_{L^\infty} s_j ( ( -\Delta + I)_{B}^{-\f 12}) \| \chi R_0 \chi \|_{ L^2 \rightarrow H^1} 
\les \| V\|_{L^\infty}j^{-1/d} \la m  \ra e^{|\lambda| }
\end{multline} 
where  $ \Delta_B$ be the Laplace-Beltrami operator in $ \overline {B(0, R)}$. In the second inequality we used  property $\text{ii})$, and in the last equality we used  Weyl law for eigenvalue asymptotics and \eqref{Hexp}. Therefore, 
\begin{align}
Tr ( (VR_0(\lambda)  \chi)^{d+1}) \leq \sum_{j \in \mathbb{Z}^{+}} s_j( (VR_0(\lambda)  \chi)^{d+1}) \les \la m \ra  e^{|\lambda| } \label{trbound}
\end{align}
and hence, $(\chi R_0 V )^{d+1}$ is trace class. 

We next estimate $|H(\lambda)|$. Recall \eqref{Hs}, we have for any $ z \in D^{+}$ 
\begin{align} \label{differ}
VR_0(z e^{2 \pi i m} ) \chi = \sum_{k=1}^{\frac{d-1}{2}} V B_k(r, z )+  2 \pi  m i z^{d-1}  V  \mathcal{E}^*_{\chi}(\bar{z} ) \mathcal{E}_{\chi}( z) 
 \end{align}
Therefore, by the property $\text{i})$ 
\begin{align}\label{tobeopt}
s_j (V R_0(z e^{2\pi i m } ) \chi)  \les \| V\|_{L^{\infty}} \Big( \sum_{k=1}^{\frac{d-1}{2}} s_{(\frac{j}{ d-1})} ( B_k(r, z)) + s_{(\frac{j}{2})}  ( m  z^{d-1}   \mathcal{E}^*_{\chi}( \bar{z}) \mathcal{E}_{\chi}( z))  \Big)
 \end{align}
 Similarly, by \eqref{Hss} one has for any $z \in D^{-}$ 
\begin{align}\label{optsecond}
 s_j (V &R_0(z e^{2\pi i m } )\chi)  \les \\
&    \| V\|_{L^{\infty}} \Big( \sum_{k=1}^{\frac{d-1}{2}} s_{(\frac{j}{ d-1})} ( B_k(r, z)) + s_{(\frac{j}{4})}  ( m  z^{d-1}   \mathcal{E}^*_{\chi}( \bar{z}) \mathcal{E}_{\chi}(z) ) + s_{(\frac{j}{4})}  ( z^{d-1}   \mathcal{E}^*_{\chi}( -\bar{z}) \mathcal{E}_{\chi}(-z ))    \Big) \nn 
 \end{align}
First note that, by \eqref{Hs} we have 
\begin{align} \label{B}
s_{j} ( B_k(r, z)) \les ( ( -\Delta +I)_{B}^{-\f k 2}) \| B_k(r, z )\|_{L^2 \to H^k} \les  j^{-k/d}\la z \ra^{k-1}. 
\end{align}
This estimates the first term in \eqref{tobeopt} and \eqref{optsecond} by $ \sum_{k=1}^{\frac{d-1}{2}} j^{-k/d}\la z \ra^{k-1}$. 

We next estimate the second term in \eqref{tobeopt}. The idea that we use here has been first presented in \cite{Vod2}. Recall that 
$$ \mathcal{E}^*_\chi (z)= \int_{ \mathbb S^{d-1}} \chi(x)e^{ i z x \cdot \omega} g(\omega) d\omega.  $$  
Therefore, 
$$ 
s_{j/2}  ( m  z^{d-1}   \mathcal{E}^*_{\chi}( \bar{z}) \mathcal{E}_{\chi}( z)) \leq C_0 e^{|z|} s_{j/2} ( (- \Delta + I)_{\mathbb S^2} ^{-l} ) \| (- \Delta + I)_{\mathbb S^{d-1}}^ l \la m \ra \mathcal{E}_\chi (z) ) \|_{L^2 (\R^n) \to L^2 (\mathbb S^{2})}
$$ 

We also have, 
\begin{align}
 \| (- \Delta + I)_{\mathbb S^2}^ l \la m \ra  \mathcal{E}_\chi (z) \|_{L^2 (\R^n) \to L^2 (\mathbb S^{2})} \leq e^{ C_2 ( \la z \ra + \log \la m \ra) }  (2 l) !
\end{align}
This gives 
\begin{align*}
s_{j/2}  ( m  z^{d-1}   \mathcal{E}^*_{\chi}( \bar{z}) \mathcal{E}_{\chi}( z))
 \leq C_1 j^{-l} e^{ C_2 ( \la z \ra + \log \la m \ra) }  (2 l) !
\end{align*}
Pick $ j/ l = C_3 e $, then $ j^{-l} ( 2 l) ! \leq  e^{-C_4 j^{\f1{d-1}}}$. Hence, 
\begin{align} \label{optimize}
s_{j/2}  ( m  z^{d-1}   \mathcal{E}^*_{\chi}( \bar{z}) \mathcal{E}_{\chi}( z)) \les  e^{ C_1 (\la z \ra + \log \la m \ra) - C_2 j^{\f1{d-1}}} 
\end{align}
for some $C_1, C_2 \in \R$.

Using a similar argument that leads to \eqref{optimize}, we obtain for any $ z \in D^{-}$ 
 \begin{align} \label{C}
  & s_{(\frac{j}{4})}  ( m  z^{d-1}   \mathcal{E}^*_{\chi}( \bar{z}) \mathcal{E}_{\chi}(z) )  \les  e^{ C_3 (\la z \ra + \log \la m \ra) - C_4 j^{\f1{d-1}}}  \\
  &  s_{(\frac{j}{4})}  ( \bar{z}^{d-1}   \mathcal{E}^*_{\chi}( z) \mathcal{E}_{\chi}( \bar{z}))  \les e^{ C_5 \la z \ra  - C_6 j^{\f1{d-1}}} \nn
 \end{align} 

 Using \eqref{B}, \eqref{optimize}  in \eqref{tobeopt}, and \eqref{B},\eqref{C} in  \eqref{optsecond}, we obtain 
\begin{align}
s_j ( V R_0(z e^{i m \pi}) \chi ) \les  \begin{cases} 
  \sum^{(d-1)/2}_k \la z \ra^{k-1} j^{-k/d} &  j \geq \tilde{C}( \la z \ra + \log \la m \ra)^{d-1} \\
      e^{\la z \ra + \log \la m \ra}  & j \leq \tilde{C} ( \la z \ra + \log \la m \ra)^{d-1}
   \end{cases}
\end{align} 
for some $\tilde{C} \in \R$ determined according to $C_i$, $i=1,2,...,6$.

Note that if $  j \geq \tilde{C}( \la z \ra + \log \la m \ra)^{d-1} $ then $ z^{k-1} j^{- (k-1)/d} \les \la z\ra^{(k-1)/d} $. Also since,  $ k \leq \frac{d-1}{2}$, we further have $ z^{k-1} j^{- (k-1)/d} \les \la z\ra^{(d-3)/2d}$, for $ d \geq 3$.   Therefore, 
\begin{align}\label{finalest}
s_j ((V R_0(z e^{ 2 i m  \pi})  \chi)^{d+1}) \les  \begin{cases} 
 \la z \ra^{\frac{d-3}{2d} (d+1)}  j^{- \f {d+1}{d}} &  j \geq \tilde{C} ( \la z \ra + \log \la m  \ra)^{d-1} \\
      e^{\la z \ra + \log \la m  \ra}  & j \leq \tilde{C} \la ( \la z \ra + \log \la m \ra)^{d-1}.
   \end{cases}
\end{align} 
Recall, we have $ |H(\lambda)| \leq \Pi_{j} s_j (( V R_0 \chi)^{d+1}) $. Therefore, using \eqref{finalest} for $ \lambda = z e^{2i \pi m} $  we obtain 
\begin{align} \label{thirdpi}
|H(\lambda)| \les \Big( \prod_{ j \leq \tilde{C} (\la  z \ra  + \log \la m  \ra)^{d-1}} e^{ \la z \ra + \log \la m  \ra} \Big) \Big( e^{ \la z \ra^{\frac{d-3}{2d} (d+1)}  \sum_{j \geq \tilde{C} \la z \ra + \log \la m \ra} j^{-\f {d+1} {d}} \Big)} \\ \les e^ {(\la  z  \ra  + \log \la m  \ra)^d } . \nn
\end{align}

This finishes the proof of Proposition~\ref{detest}. 
\end{proof}
\section{Proof of Theorem~\ref{the:main}}
In the proof of Theorem~\ref{the:main}, we use Theorem~\ref{th:vodev} from \cite{VD}.
\begin{theorem} \label{th:vodev}Let $h(z)$ be a holomorphic function on $\{ z \in \Lambda: |z| \leq r \}$ for some $r \geq1$, and 

$$ h(z) = 1+ O_1 (|z|^{\gamma}) \,\,\,\ \text{as} \,\,\,\,\ z \rightarrow 0, |arg z | \leq a. $$

$ \forall a \geq 1$, with $ \gamma > 0$ independent of $a$. Then for any integer $m \geq 2 /\gamma$ the number of $ \tilde{N} ( r/2, \pi m /4 ) $ of the zeros of $h(z)$ in $\{ z \in \Lambda: |z| \leq r/2 , |argz| \leq \pi m/4\} $ repeated according to multiplicity, satisfies the bound  with a constant $C>0$ independent of $h(z)$, $r$ and $m$ , where $ \log^{+} x = \max \{ \log x, 0\}$. 
\begin{align} \label{vodbound}
 \tilde{N} ( r/2, \pi m /4 )\leq &\ C m \sup_{ |\theta| \leq \pi m/2} \log^{+} |h ( re^{i\theta})| \\
  &+ C r^{1/m} \int_0^r t^{-1-1/m} \log^{+} | h( t e^{-i\pi m/2} )h( t e^{ i\pi m/2} )| dt  \nn
\end{align} 
\end{theorem}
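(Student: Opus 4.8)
The plan is to unwrap the logarithmic cover with a power map and then apply Carleman's formula for a half-disk, whose two boundary contributions (an outer-arc integral and a diameter integral) are tailor-made to produce exactly the two terms on the right of \eqref{vodbound}. On $\Lambda$ the function $\log z$ is single-valued, so $w=z^{1/m}:=\exp(\tfrac1m\log z)$ is a conformal bijection of the sectorial piece $\{z\in\Lambda:|z|\le r,\ |\arg z|\le \pi m/2\}$ onto the closed right half-disk $\Omega_R:=\{w\in\C:|w|\le R,\ |\arg w|\le\pi/2\}$, $R:=r^{1/m}$. Setting $g(w):=h(w^m)$, the hypothesis makes $g$ holomorphic on $\Omega_R$, and the zeros of $h$ in the target sector $\{|z|\le r/2,\ |\arg z|\le\pi m/4\}$ correspond bijectively (with multiplicity) to the zeros $w_k=\rho_k e^{i\psi_k}$ of $g$ in $\{|w|\le 2^{-1/m}R,\ |\psi_k|\le\pi/4\}$. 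On the diameter $w=\pm iy$ one has $g(\pm iy)=h(y^m e^{\pm i\pi m/2})$, and on the outer arc $w=Re^{i\theta}$ one has $g(Re^{i\theta})=h(re^{im\theta})$ with $m\theta$ sweeping $[-\pi m/2,\pi m/2]$.

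Since $h=1+O_1(|z|^\gamma)$ we have $g(0)=1\neq0$; after a harmless perturbation of $R$ keeping $g$ zero-free on $\partial\Omega_R$, Carleman's formula for the right half-disk reads
\[ \sum_{\rho_k<R}\Big(\frac{1}{\rho_k}-\frac{\rho_k}{R^2}\Big)\cos\psi_k=\frac{c_1}{R}\int_{-\pi/2}^{\pi/2}\log|g(Re^{i\theta})|\cos\theta\,d\theta+c_2\int_0^R\Big(\frac{1}{y^2}-\frac{1}{R^2}\Big)\log|g(iy)g(-iy)|\,dy+E_0, \]
with absolute $c_1,c_2>0$ and $E_0$ the origin contribution. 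Every left-hand summand is nonnegative (as $\cos\psi_k>0$ and $\rho_k<R$), so discarding all zeros outside the sub-region only lowers the sum. For a retained zero one has $\cos\psi_k\ge 1/\sqrt2$ and $\rho_k\le 2^{-1/m}R$, whence $\frac1{\rho_k}-\frac{\rho_k}{R^2}\ge \frac{2^{1/m}-2^{-1/m}}{R}\ge\frac{2\log 2}{mR}$. Thus the left side is $\ge \frac{c}{mR}\,\tilde N(r/2,\pi m/4)$, giving $\tilde N(r/2,\pi m/4)\le CmR\cdot(\text{right-hand side of Carleman})$.

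It remains to match the two boundary terms. On the arc, bounding $\log\le\log^+$ and using $\int_{-\pi/2}^{\pi/2}\cos\theta\,d\theta=2$ gives a bound by $\frac{C}{R}\sup_{|\theta|\le\pi m/2}\log^+|h(re^{i\theta})|$, which after multiplication by $CmR$ reproduces the first term $Cm\sup_{|\theta|\le\pi m/2}\log^+|h(re^{i\theta})|$. On the diameter, the substitution $t=y^m$ (so $y^{-2}\,dy=\tfrac1m t^{-1-1/m}\,dt$ and $y:0\to R$ becomes $t:0\to r$) turns $c_2\int_0^R y^{-2}\log^+|g(iy)g(-iy)|\,dy$ into $\frac{c_2}{m}\int_0^r t^{-1-1/m}\log^+|h(te^{i\pi m/2})h(te^{-i\pi m/2})|\,dt$; multiplication by $CmR=Cmr^{1/m}$ reproduces the second term $Cr^{1/m}\int_0^r t^{-1-1/m}\log^+|h(te^{-i\pi m/2})h(te^{i\pi m/2})|\,dt$. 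The hypotheses enter decisively here: $g(0)=1$ forces $E_0=0$ (or $O(1)$, absorbable), while $m\gamma\ge2$ makes $\log|g(\pm iy)|=O(y^{m\gamma})$ integrable against $y^{-2}$ near $y=0$, so the diameter integral converges.

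The main obstacle is pinning down Carleman's formula with the correct positive constants and the precise origin term $E_0$, and justifying the passage from that \emph{identity} to the one-sided $\log^+$ bound. On the left this is clean (all terms nonnegative, so restricting to the sub-sector is legitimate), but on the right it requires genuine control of the contributions near the origin — exactly what $m\ge 2/\gamma$ supplies. Finally, the degeneration of the weight $\frac1{\rho_k}-\frac{\rho_k}{R^2}$ for zeros near the outer arc is why one counts only in $|z|\le r/2$; the resulting factor $\frac1m$ is precisely cancelled by the prefactor $m$ in the statement, so nothing is lost.
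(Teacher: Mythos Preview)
The paper does not prove this theorem; it is quoted verbatim from Vodev \cite{VD} and invoked as a black box in the proof of Theorem~\ref{the:main}. There is therefore no ``paper's own proof'' to compare against here.

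That said, your argument is the correct one and is precisely Vodev's method: pull back to a half-disk via $w=z^{1/m}$, apply Carleman's formula, and read off the two boundary terms. Your bookkeeping is right in all the places that matter --- the weight degeneration $\tfrac{1}{\rho_k}-\tfrac{\rho_k}{R^2}\gtrsim \tfrac{1}{mR}$ on the inner sub-sector, the Jacobian $y^{-2}\,dy=\tfrac{1}{m}t^{-1-1/m}\,dt$ on the diameter, and the role of $m\gamma\ge 2$ in making both the origin term $E_0$ and the diameter integral near $y=0$ harmless. One small point worth tightening: the passage from $\log$ to $\log^+$ on the right of Carleman is not quite ``$\log\le\log^+$'' alone, since you are also silently dropping the $-1/R^2$ piece of the diameter weight; this is legitimate once you have passed to $\log^+\ge 0$, but the two moves should be done in that order.
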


\begin{proof}[Proof of Theorem~\ref{the:main}] 
Note that by \eqref{compactexp}, we have 
$$(V R_0(z) \chi) = \sum_{j=0}^{\infty} z^j K_{1,j} + \log(z) \sum_{j=0}^{\infty} z^{2j} K_{2,j} $$
for $ |z| \ll 1$ where $K_{1,j}$ and $K_{2,j}$ are trace class operators, therefore, we have 
$$ H(z) =  \sum_{j=0}^{\infty}c_{1,j}  z^j  + \log(z) \sum_{j=1}^{\infty} c_{2,j} z^{2j},\,\,\,\,\,\,\,\, |z| \ll 1  $$ 
Note that since $0$ is non-degenerate there is at least one $ c_{1,j}, c_{2,j}  $ that is not zero. Let $p$ be the smallest integer such that one of the $ c_{1,p}, c_{2,p/2}  $ is not zero. Then, define 

\begin{align}
H_1(z):=  \begin{cases} 
\frac{ H(z)}{ c_{1,p} z^{p}}  & \text{if} \,\, c_{1,p} \neq0  \\
      \frac{ H(z)}{ c_{2,p} z^{p} \log (z) } & \text{if}  \,\,\ c_{2,p/2}\neq 0\\
      \frac{ H(z)}{  z^{p} (c_{1,p} +c_{2,p} \log (z) ) } &  \text{if} \,\,\ c_{1,p} , c_{2,p/2} \neq 0 \\
        \end{cases}
\end{align} 

 Note that the estimate in  Proposition~\ref{detest} is still valid for $H_1(z)$. Moreover, $ \log^{+} | H_1(z)| \les z^{1-}$ for $z \ll 1$. So,  we can use \eqref{vodbound} for $H_1(z)$ to estimate the number of zeros of $H_1$. 
 
 We have 
 \begin{align*}
 m \sup_{ |\theta| \leq \pi m/2} \log^{+} |H_1 ( re^{i\theta})| \les m  ( \la r \ra + \log \la m \ra)^d . 
 \end{align*}
Furthermore, 
\begin{multline}
 r^{1/m}  \int_0^r t^{-1-1/m} \log^{+} | H ( t e^{-i\pi m/2} )H ( t e^{ i\pi m/2} )| dt  \\ 
   \les r^{1/m} \int_0^r t^{-1/m-} dt + \int_{r_0}^{r} t^{-1-1/m} (  \la t\ra + \log \la m \ra )^d dt  \les r^{d} + ( \log \la m \ra )^d+  r^{1/m} ( \log \la m \ra )^d. 
\end{multline} 
Note that, using $ ab \les a^k + b^{k/k-1}$ for $k\geq 2$, and $ a,b >0$, we have 
$$ r^{1/m} ( \log \la m \ra )^d \les r^{d}+ ( \log \la m \ra )^{d ( \frac{d}{d-m})} \les  r^{d} + m  ( \log \la m \ra )^d $$ 
Therefore, the zeros of $H_1(z)$ is established as ($m \geq 1$) 
$$  \tilde{N} ( r/2, \pi m /4 )\les m  ( \la r \ra + \log \la m \ra)^d$$
Including the zero energy, we obtain Theorem~\ref{the:main}. 
\end{proof}

\appendix \label{app}
\section{}
In this section we compute the scattering matrix for $H$ following \cite{DZ}.  We first note that as in the case of the Schr\"odinger, all solutions to  $ (\sqrt{-\Delta} - \lambda )  \psi_0=0$ for $ \lambda > 0$,  are the superposition of the elementary plane wave solutions $e^{ i \lambda \la x , \omega \ra }$  where $ x \in \R^d$, and $ \omega \in \mathbb{S}^{d-1}$, i.e. 
\begin{align} \label{psi0def}
\psi_0(x) = \int_{\mathbb{S}^{d-1}} e^{ i \lambda \la x , \omega \ra } \phi(\omega) d \omega.
\end{align}
Moreover, since any solution to $ (\sqrt{-\Delta} - \lambda ) \psi=0$  is also solution to $(-\Delta - \lambda^2  ) u=0$, $\psi_0(x)$ is the unique solution to  $ (\sqrt{-\Delta} -\lambda )  \psi_0=0$ such that as $ |x| \rightarrow \infty $, 
\begin{align} \label{psi0}
\psi_0(x) =   \frac{c_d  }{ ( \lambda r )^{\frac{d-1}{2} }} ( e^{i \lambda r}  \phi(\theta) + e^{-i \lambda r} i^{1-d}   \phi(-\theta) ) + O( r^{-\frac{d+1}{2}}). 
\end{align}
Here, we let $x = r \theta$, and used the following expansion in the weak sense as $ r \to \infty$ 
\begin{align}
e^{ i \lambda \la x , \omega \ra } =  \frac{1}{ ( \lambda r )^{\frac{d-1}{2} }}\Big(    \frac{( 2 \pi)^{\frac{1}{2} (n-1)}}{e^{\frac{1}{4} \pi (n-1)i}}  e^{i \lambda r}  \delta_\omega(\theta) + e^{\frac{1}{4} \pi (n-1)i} ( 2 \pi) ^{\frac{1}{2} (n-1)} e^{i \lambda r} \delta_{-\omega}(\theta) +  O(r^{-1})\Big). 
\end{align}

 Note that if one uses  $e^{- i \lambda \la x , \omega \ra }$ instead of $e^{i \lambda \la x , \omega \ra }$ in \eqref{psi0def}, then one obtains a similar expression to \eqref{psi0} with different parametrization. The relation between these two parametrization is given by the  absolute scattering matrix. In particular, the operator that maps $\phi(\theta)$ to $i^{1-d}   \phi(-\theta)$ is the absolute scattering matrix for the free equation, and we represent it as $S_{\text{abs},0}(\lambda)$.

To define the same operator for $H$, we we look for a generalized eigenfunctions  for $H$ which behave like plane waves. We define these family of functions  in the form of 
\begin{align}
\psi(x, \lambda) := \int_{\mathbb{S}^{d-1}} w(\lambda, \omega, x) \phi(\omega) d \omega 
\end{align}
where $ w(\lambda, \omega, x) = e^{ -i \lambda \la x , \omega \ra } + u(x, \lambda, \omega) $, and $ u \to 0 $ pointwise as $ x \to \infty$. In particular, we would like to solve $u$ from the following equation. 
\begin{align} \label{Rvsolution}
( \sqrt{-\Delta} + V - \lambda)u = - V  e^{ -i \lambda \la x , \omega \ra }
\end{align}

Below we show that if $H$ has no positive eigenvalues then $R_V(\lambda)$ is defined for all $ \lambda >0$, and $u= - R_V(\lambda) ( V e^{- i \lambda \la \cdot, \omega }\ra )$ solves \eqref{Rvsolution}.  Indeed, it is the unique outgoing solution. 

 We start with the following two lemmas. 

\begin{lemma} \label{noout} Let $ V$ be compactly supported, real valued and  bounded potential. If  $u= R_0(\lambda) g$  for some $ g \in L^\infty_c$ solves $(H-\lambda)u=0$ then $\lambda >0 $ is an eigenvalue.

\end{lemma}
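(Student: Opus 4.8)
\textbf{Proof proposal for Lemma~\ref{noout}.} The plan is to run the standard Agmon--Kato--Rellich-type argument adapted to $\sqrt{-\Delta}$: show first that the putative generalized eigenfunction $u=R_0(\lambda)g$ is genuinely in $L^2$, and then that it solves $(H-\lambda)u=0$ in the usual sense so that $\lambda$ is an honest eigenvalue. The key point distinguishing this from the Schr\"odinger case is that we must use the explicit kernel expansion \eqref{R0main} (equivalently \eqref{compactexp}) to read off the asymptotics of $u$ at infinity and the limiting absorption bound $R_0^\pm(\lambda):L^{2,\sigma}\to\mathcal H_{1,-\sigma}$, $\sigma>1/2$, recalled in Remark~\ref{rmk}(ii) from \cite{ZHZ}.

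First I would fix notation: since $g\in L^\infty_c$ we have $g\in L^{2,\sigma}$ for every $\sigma$, and $u=R_0(\lambda)g=R_0^+(\lambda)g$ is the outgoing solution of $(\sqrt{-\Delta}-\lambda)u=-Vu$ (rearranging $(H-\lambda)u=0$ as $(\sqrt{-\Delta}-\lambda)u=-Vu$, with $Vu$ compactly supported). From \eqref{R0main} the kernel $R_0(\lambda)(x,y)$ for $|x-y|$ large is, up to lower order, a constant multiple of $e^{i\lambda|x-y|}|x-y|^{-(d-1)/2}$ plus terms decaying faster; hence for $x$ outside the support of $Vu$ one obtains the radiation expansion
\begin{align}\label{rad}
u(x)=\frac{c_d\,e^{i\lambda r}}{(\lambda r)^{\frac{d-1}{2}}}\,a(\theta,\lambda)+O\big(r^{-\frac{d+1}{2}}\big),\qquad x=r\theta,
\end{align}
with a smooth amplitude $a(\cdot,\lambda)$ on $\mathbb S^{d-1}$. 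The next step is the Rellich-type vanishing argument: because $u$ satisfies $(-\Delta-\lambda^2)u=0$ outside a compact set (every solution of $(\sqrt{-\Delta}-\lambda)u=f$ with $f$ compactly supported solves $(-\Delta-\lambda^2)u=(\sqrt{-\Delta}+\lambda)f$, which is again compactly supported), I can invoke the classical Rellich uniqueness theorem / Kato's theorem on absence of positive eigenvalues-at-infinity: a solution of the Helmholtz equation in $|x|>R$ that is purely outgoing and lies in $L^2$ at infinity in the sense $\int_{|x|>R}|u|^2<\infty$ must have its far-field amplitude... no --- more carefully, I would instead show directly that $u\in L^2$. From \eqref{rad}, $|u(x)|^2\sim |a(\theta)|^2 r^{-(d-1)}$, which is \emph{not} integrable at infinity unless $a\equiv 0$; but we do not yet know $u\in L^2$. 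So the correct route is: use the limiting absorption estimate to get $u\in\mathcal H_{1,-\sigma}$, combine with a Green's-identity / energy-flux computation on large spheres,
\begin{align}
0=\Im\int_{|x|=\rho}\bar u\,\partial_r u\,dS + (\text{boundary terms that vanish since }Vu\text{ is compactly supported and }V\text{ is real}),
\end{align}
to force the outgoing flux, hence $\int_{\mathbb S^{d-1}}|a(\theta)|^2\,d\theta=0$, hence $a\equiv0$. With $a\equiv 0$ the expansion \eqref{rad} improves to $u=O(r^{-(d+1)/2})$, which is square-integrable; feeding this back into $(-\Delta-\lambda^2)u=0$ outside a compact set together with the unique continuation / Rellich argument shows $u\in L^2(\R^d)$.

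Once $u\in L^2$, finishing is routine: $u=R_0(\lambda)g$ with $g=-Vu\in L^2_c$ gives $(\sqrt{-\Delta}-\lambda)u=-Vu$ in the distributional sense, and since $u\in L^2$ with $\sqrt{-\Delta}u=\lambda u-Vu\in L^2$ we get $u\in\mathcal H_1=\mathrm{Dom}(\sqrt{-\Delta})$ and $(H-\lambda)u=0$ in $L^2$; moreover $u\not\equiv0$ unless $g\equiv 0$ (and if $g\equiv0$ then $u\equiv0$, the trivial case which one excludes by hypothesis, or which simply makes the conclusion vacuous). Hence $\lambda>0$ is an eigenvalue of $H$. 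The main obstacle I anticipate is the vanishing-of-the-far-field step: making the energy-flux identity rigorous requires knowing $u$ and $\nabla u$ have the precise decay \eqref{rad} with controlled error, which in turn leans on differentiating the kernel expansion \eqref{R0main} and on the mapping property $R_0^+(\lambda):L^{2,\sigma}\to\mathcal H_{1,-\sigma}$; care is also needed because $\sqrt{-\Delta}$ is nonlocal, so one cannot localize $u$ naively --- but since the \emph{source} $Vu$ is compactly supported and $u$ solves a genuine local Helmholtz equation away from that support, the nonlocality does not actually enter the far-field analysis, and the classical Rellich argument applies verbatim there.
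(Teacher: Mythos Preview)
Your overall strategy---show the far-field amplitude $a(\theta)$ vanishes, deduce $u\in L^2$, hence $\lambda$ is an eigenvalue---is correct and matches the paper. But the step on which everything rests, the vanishing of $a$, has a genuine gap in your argument. You assert that ``$u$ satisfies $(-\Delta-\lambda^2)u=0$ outside a compact set'' because the source $(\sqrt{-\Delta}+\lambda)f$ is ``again compactly supported''. This is false: $\sqrt{-\Delta}$ is nonlocal (its kernel decays only like $|x-y|^{-d-1}$, cf.\ \eqref{fracdef}), so if $g=-Vu\in L^\infty_c$ then $(\sqrt{-\Delta}+\lambda)g$ decays like $|x|^{-d-1}$ but is \emph{not} compactly supported. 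Consequently $u$ does \emph{not} solve the free Helmholtz equation away from $\mathrm{supp}\,V$, and the claim that ``the classical Rellich argument applies verbatim there'' is not justified. Your energy-flux identity $\Im\int_{|x|=\rho}\bar u\,\partial_r u\,dS$ will pick up a volume term $\Im\int_{B_\rho}\bar u\,(\sqrt{-\Delta}+\lambda)g$ which does not vanish merely because $Vu$ is compactly supported; you would need a separate argument (using $V$ real and moving $\sqrt{-\Delta}+\lambda$ across the pairing) to see that its limit is real.

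The paper sidesteps this entirely by working on the Fourier side. It observes that by the trace/Sokhotski--Plemelj formula,
\[
\Im\langle R_0(\lambda)g,\,g\rangle \;=\; \pi\lambda^{d-1}\int_{\mathbb S^{d-1}}|\hat g(\lambda\theta)|^2\,d\omega,
\]
while on the other hand $\langle R_0(\lambda)g,\,g\rangle=\langle u,\,g\rangle=-\langle u,\,Vu\rangle$, which is real because $V$ is real. Hence $\hat g(\lambda\theta)=\widehat{Vu}(\lambda\theta)=0$, and since the far-field amplitude in \eqref{Vtheta} is precisely $c_n\lambda^{(d-1)/2}\widehat{Vu}(\lambda\theta)$, one gets $u\in L^2$ immediately from the $O_{L^2}$ remainder. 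This avoids any localization of $u$ and never needs $u$ to satisfy a local PDE away from the support of $V$, which is exactly where your approach runs into the nonlocality.
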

\begin{proof} 
Note that, if $ \lambda >0$ one has 
\begin{align}
R_0( \lambda ) = 2 \lambda R_{\Delta} (\lambda) +  R_0(-\lambda) 
\end{align}
where $R_{\Delta} (\lambda)$ is the outgoing resolvent operator of the free Schr\"odinger operator. Moreover, for $ x = r \theta $, one has 
\begin{align}\label{R0exp}
 R_{\Delta} (\lambda)f = e^{i \lambda r} r^{\frac{1-d}{2}} h(\theta ) + O_{L^2(\R^d)},\,\,\,\, h( \theta) = c_n  \lambda^{\frac{d-3}{2}} \widehat{f}( \lambda \theta), 
\end{align}
Therefore, using \eqref{R00} and \eqref{calc} we have for $ \lambda>0$ 
\begin{align} \label{Vtheta}
R_0(\lambda)f = e^{i \lambda r} r^{\frac{1-d}{2}} h(\theta ) + O_{L^2(\R^d)},\,\,\,\, h( \theta) = c_n  \lambda^{\frac{d-1}{2}} \widehat{f}( \lambda \theta), 
\end{align}
 Now, let $u = R_0(\lambda)g$ be  solution to $(H-\lambda)u=0$ then by Remark~\ref{rmk} part $\text{ii)}$ one has  $ (I + V R_0(\lambda) )g=0$, and $g = -Vu$. Hence, it is enough to show that $\widehat{Vu}( \lambda \theta)=0$ to establish the proof. 
 
To see $\widehat{Vu}( \lambda \theta)=0$, we note that one has for $ f \in \mathcal{S}$, $ \lambda >0$
\begin{align} \label{trace}
 \lim_{\epsilon \to 0} \la R_0(\lambda + i \epsilon)f , f \ra & =  \lim_{ \epsilon \to 0}\int_{\R^d} \frac{ |\hat{f}(\xi)|^2}{ |\xi| -(\lambda +i \epsilon)}  d\xi \nn \\ 
& = \text{p.v.}\int_{ 0}^{\infty}  \int_{\mathbb{S}^{d-1}} \frac{ |\hat{f} ( r \theta)|^2 r^{d-1} } {(r-\lambda)} dr d \omega + i \pi \lambda^{d-1} \int_{\mathbb{S}^{d-1}} |\hat{f} ( \lambda \theta)|^2 d \omega. 
\end{align}  
Moreover, 
\begin{align*}
0&= \lim_{\epsilon \to 0} \la R_0(\lambda + i \epsilon)g, ( I + R_0(\lambda + i \epsilon)g \ra  \\ 
 &= \lim_{\epsilon \to 0} \la R_0(\lambda + i \epsilon)g, g \ra + \| u\|^2_{L^2}
\end{align*}
Note that $g=-Vu$ is compactly supported, bounded and $V$ is real. Therefore, $\widehat{Vu}( \lambda \theta) = 0$ almost everywhere. This establishes the statement. 

\end{proof}

\begin{rmk} \label{rmk2}
Let $u \in \mathcal{H}_{1, \text{loc}}$ and $(H - \lambda)u= 0$ for $ \lambda >0$, and let $ \hat{u}$ be the Fourier transform of $u$.  If  $ \hat{u} \in L^1_{\text{loc}}$ then $u \in L^{2, \sigma}$ for $ \sigma >1/2$. This is a consequence of $\widehat{Vu}( \lambda \theta)=0$ as one has 
\begin{align}
\hat{u}(\xi) = \frac{ \widehat{Vu} (\xi)} {|\xi|- \lambda} = \chi(|\xi| < \lambda /2) \frac{ \widehat{Vu} (\xi)} {|\xi|- \lambda } +\chi(|\xi|  \geq \lambda /2)  \frac{ \widehat{Vu} (\xi)} {|\xi |-\lambda}
\end{align}
and by Theorem~B.1 in \cite{Agmon} the second term maps $L^2 \to L^{2, \sigma}$. Note that the first term is in $\mathcal{H}_s$ for any $ s < \frac{d}{2} +1$. 
\end{rmk}

Recall that we originally defined $R_0(z)$, and $ R_V(z)$ for $\Im z > 0, \Re z >0$. Moreover, by Remark~\ref{rmk} part $\text{ii)}$ we have $\lim_{\epsilon \to 0} R_0(\lambda+i \epsilon)$  exist as an operator $L^{2,\sigma} \to L^{2,-\sigma}$ and therefore $u=R_0(\lambda)f$ for $f \in L^{2,\sigma}$ solves $ (\sqrt{-\Delta} - \lambda) u=f$. In the following lemma,  we show that $R_V(\lambda)f $  solves $(H-\lambda)u=f$ for $ \lambda \in \R^{+} \backslash e_+ (H)$ and $ f \in L^{2, \sigma}$.

\begin{lemma}\label{solves} Let $ V$ be compactly supported real, bounded potential. Moreover, let $ e_{+} (H)$ be the discrete set of positive eigenvalues of $H$. Then for any $ f \in L^{2,\sigma}$, and $ \lambda \in \R^{+} \backslash e_{+} (H) $,  $ R_V (\lambda) f \in \mathcal{H}_{1,-s}$  and it solves 
   $$  ( \sqrt{-\Delta} + V - \lambda) u = f$$ 

\end{lemma}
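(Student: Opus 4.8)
The plan is to establish the claim by the standard resolvent-identity argument, taking advantage of the meromorphic continuation machinery already in place. First I would fix $f \in L^{2,\sigma}$ with $\sigma > 1/2$ and $\lambda \in \R^{+} \backslash e_{+}(H)$. By Remark~\ref{rmk}(ii) (via Theorem~4.7 of \cite{ZHZ}), the limit $R_0^{+}(\lambda) := \lim_{\eps \to 0} R_0(\lambda + i\eps)$ exists as a bounded operator $L^{2,\sigma} \to \mathcal{H}_{1,-\sigma}$, and $u_0 := R_0^{+}(\lambda)f$ solves $(\sqrt{-\Delta} - \lambda)u_0 = f$ in the strong sense. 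The goal is to produce the analogous statement with $V$ added, i.e. to solve $(\sqrt{-\Delta} + V - \lambda)u = f$. The natural candidate is $u := R_0^{+}(\lambda)f - R_0^{+}(\lambda)V(I + R_0^{+}(\lambda)V)^{-1}R_0^{+}(\lambda)f$, equivalently $u = (I + R_0^{+}(\lambda)V)^{-1} R_0^{+}(\lambda) f$, which is exactly the boundary value of $R_V(z)$ as $z \to \lambda$ from $\Im z > 0$.

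The key steps, in order, are: (i) Show $(I + R_0^{+}(\lambda)V)$ is invertible on $\mathcal{H}_{1,-\sigma}$ for $\lambda \notin e_{+}(H)$. Since $\chi R_0^{+}(\lambda) V$ is compact (as in the proof of Proposition~\ref{prop:mer}, $R_0(z)V$ is compact for every $z\in\Lambda$ and this persists at the boundary because of the continuity built into \eqref{smallz} and the $\mathcal{H}_1$ smoothing), by the Fredholm alternative it suffices to rule out a nontrivial solution of $(I + R_0^{+}(\lambda)V)\psi = 0$. Such a $\psi$ satisfies $\psi = -R_0^{+}(\lambda)V\psi$, hence $\psi = R_0^{+}(\lambda)g$ with $g = -V\psi \in L^\infty_c$, and $(H-\lambda)\psi = 0$; by Lemma~\ref{noout} this forces $\lambda$ to be a positive eigenvalue of $H$, contradicting $\lambda \notin e_{+}(H)$. (ii) With invertibility in hand, set $u := (I + R_0^{+}(\lambda)V)^{-1}R_0^{+}(\lambda)f$; since $R_0^{+}(\lambda)f \in \mathcal{H}_{1,-\sigma}$ and the inverse is bounded on that space (possibly after a compact-support/cutoff reduction using that $V$ is compactly supported), we get $u \in \mathcal{H}_{1,-\sigma}$. (iii) Verify $u$ solves the PDE: from $(I + R_0^{+}(\lambda)V)u = R_0^{+}(\lambda)f$ one gets $u + R_0^{+}(\lambda)Vu = R_0^{+}(\lambda)f$, i.e. $u = R_0^{+}(\lambda)(f - Vu)$, and applying $(\sqrt{-\Delta}-\lambda)$ (legitimate by step (i) of Remark~\ref{rmk}(ii), since $f - Vu \in L^{2,\sigma}$ because $Vu$ is compactly supported and $u\in\mathcal{H}_{1,-\sigma}\subset L^2_{\mathrm{loc}}$) yields $(\sqrt{-\Delta}-\lambda)u = f - Vu$, that is $(\sqrt{-\Delta}+V-\lambda)u = f$.

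The main obstacle I anticipate is step (i)—more precisely, making rigorous that $(I + R_0^{+}(\lambda)V)$ acts invertibly on the right weighted space and that the Fredholm theory applies at the \emph{real} energy $\lambda > 0$ rather than only for $\Im z > 0$. The compactness of $R_0^{+}(\lambda)V$ on $\mathcal{H}_{1,-\sigma}$ must be argued from the boundary-limit mapping properties together with the compact support of $V$ (so one really works with $\chi R_0^{+}(\lambda)\chi$, which gains a derivative and has a weakly singular, locally $L^1$ kernel by \eqref{R0main}), and one must confirm that a null vector $\psi$ of $I + R_0^{+}(\lambda)V$ automatically lies in the class to which Lemma~\ref{noout} applies (bounded, $g = -V\psi \in L^\infty_c$, and $u = R_0^{+}(\lambda)g$ with $(H-\lambda)u=0$)—this is where Remark~\ref{rmk2} and the Agmon-type bound from \cite{Agmon} enter to upgrade the regularity/decay of $\psi$. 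Once the Fredholm alternative plus Lemma~\ref{noout} are combined, the rest is the routine resolvent-identity bookkeeping sketched above.
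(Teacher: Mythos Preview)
Your proposal is correct and follows essentially the same route as the paper: reduce to invertibility of $I+R_0^+(\lambda)V$ on $\mathcal{H}_{1,-\sigma}$, establish this via compactness and the Fredholm alternative together with Lemma~\ref{noout} to exclude a nontrivial kernel when $\lambda\notin e_+(H)$, and then read off the solution and its regularity from the resolvent identity. The paper additionally checks the converse (that $\lambda\in e_+(H)$ forces non-invertibility, via Remark~\ref{rmk2}), but this is not needed for the lemma as stated, and your step~(iii) verification that $u$ solves the equation is in fact more explicit than the paper's.
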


\begin{proof} To prove the statement it is enough to show that $(I+ R_0(z) V)^{-1}$ exist in the uniform topology as an operator $\mathcal{H}_{1,-\sigma} \to \mathcal{H}_{1,-\sigma}$ if and only if $ \lambda \in \R^{+} \backslash e_{+} (H)$. If $(I+ R_0(z) V)^{-1}$ exist one has for any $ \lambda \in \R^{+} \backslash e_+ (H)$, $ R_V^{+} (\lambda) f \in \mathcal{H}_{1,-s}$  and 
  $$ ( H- \lambda) R_V(\lambda)  f = (\sqrt{-\Delta} -\lambda) (I+T(\lambda)) (I+ T(\lambda))^{-1} R_0(\lambda)f = f. $$

  We first consider when  $z \notin \R^{+}$. Note that, by resolvent identities we have for any $ f \in L^2$ 
\begin{align}
R_V(z)f+ R_0(z) V R(z)f = R_0(z) f \Rightarrow (I+ T(z)) u = R_0(z) f 
\end{align}
where $u= R_V(z)f \in \mathcal{H}_{1}$. Note that this implies that $\overline{R(I+T(z))} = \mathcal{H}_{1,-\sigma}$. Therefore, by Fredholm alternative $(I+ T(z))^{-1}$ exist for all $\Im z > 0, \Re z > 0$. Moreover, if $\lambda \in \R^{+}$, again by Fredholm alternative $(I+T(\lambda)u=0$  if and only if  $u = - R_0 (\lambda) (Vu)$, and by Lemma~\ref{noout}, $u$ is an eigenvalue. Therefore $ \lambda \in e_{+} (H)$. 

Now, suppose $ \lambda \in e_{+} (H)$, then there exist $u \in D(H)$ such that $(H-\lambda)u=0$, then $( \sqrt{-\Delta} - z)u + Vu = (z-\lambda)u$, and therefore, $u + R_0(z) Vu = (z-\lambda) R_0(z)u$. Letting $z \to \lambda$, by Remark~\ref{rmk2} we have $(I+R_0(\lambda)V)u=0$.

\end{proof}

Next, note that by Lemma~\ref{solves}, we have 
\begin{align*}
 w &=  e^{- i \lambda \la x , \omega \ra } + u(x, \lambda, \omega)  \\
    & = e^{ -i \lambda \la x , \omega \ra } - R_V(\lambda) ( V e^{ i \lambda \la \cdot, \omega }\ra ) = e^{ -i \lambda \la x , \omega \ra } - R_0(\lambda) (I+T(\lambda))^{-1} ( V e^{ i \lambda \la \cdot, \omega }\ra )
  \end{align*} 
Moreover, using \eqref{R0exp}
\begin{multline}
\int_{\mathbb{S}^{d-1}} R_0(\lambda) ( I+ V R_0(\lambda))^{-1} ( V e^{- i \lambda \la \cdot, \omega }\ra ) \phi(\omega) d \omega = \\
  = \frac{\lambda^{\frac{d-1}{2}} }{ r^{\frac{d-1}{2}}} e^{i \lambda r} \int_{\mathbb{S}^{d-1}}  \int_{\R^d} e^{- i \lambda \la \theta, x \ra} (I + VR_0(\lambda))^{-1} V e^{-i\lambda \la \cdot , \omega \ra} \phi(\omega) d \omega d x + O_{L^2} \\ 
  = \frac{\lambda^{\frac{d-1}{2}} }{ r^{\frac{d-1}{2}}} e^{i \lambda r}  \mathcal{E}(\lambda) (I + VR_0(\lambda))^{-1} V\mathcal{E^*}(\bar{\lambda})\phi + O_{L^2}. 
  \end{multline}
Therefore, 
\begin{multline}
\psi(x)= 
\frac{c_d }{ ( \lambda r )^{\frac{d-1}{2} }}\Big( e^{-i \lambda r}  \phi(\theta) + e^{i \lambda r} i^{1-d}  \Big( \phi(-\theta) + \tilde{a}_d \lambda^{d-1}  \mathcal{E}(\lambda) (I + VR_0(\lambda))^{-1} V\mathcal{E^*}(\bar{\lambda})\phi \Big) + O_{L^2} \nn
\end{multline}
The absolute scattering matrix is now defined as the map that sends $ \phi(\theta)$ to $i^{1-d}  \Big( \phi(-\theta) + \tilde{a}_d \lambda^{d-1}  \mathcal{E}(\lambda) (I + VR_0(\lambda))^{-1} V\mathcal{E^*}(\bar{\lambda})\phi \Big) $, and we represent it as $S_{\text{abs}}(\lambda)$. 
Finally, the scattering matrix arises as 
 \begin{align} \label{scmatrix}
 S(\lambda) = S_{\text{abs}}(\lambda) S^{-1}_{\text{abs},0}(\lambda) = I + a_d \mathcal{E}(\lambda) (I + VR_0(\lambda))^{-1} V\mathcal{E^*} (\bar{\lambda}) : L^2(\mathbb{S}^{d-1})  \to L^2(\mathbb{S}^{d-1})
  \end{align}
We finish this section with the following theorem which is a consequence of \eqref{scmatrix}. Here, we have uniqueness only because we assume the absence of positive eigenvalues. 
\begin{theorem} Suppose that $V$ is real, bounded and compactly supported, and $ \lambda>0$. Moreover, let $ H$ has no positive eigenvalues. Then for any $ g \in C^{\infty}(S^{d-1})$ there exist unique $ f \in C^{\infty}(S^{d-1})$ and $ \psi \in H^{1}_{\text{loc}}(\R^d)$ such that 
$$
(H- \lambda) \psi=0,\,\,\,\,\, \psi(r \theta) = r^{-{\frac{d-1}{2}}} \Big( e^{i \lambda r} f(\theta) + e^{-i \lambda r} g(\theta) \Big) + O ( r^{- {\frac{d+1}{2}}})
$$
\begin{proof}
We only prove the uniqueness. We will show that if $ \psi_0(r \theta)= r^{-{\frac{d-1}{2}}} e^{i \lambda r} f(\theta) + O ( r^{- {\frac{d+1}{2}}}) $ solves $(H- \lambda) \psi=0$, then $ \psi_0(x) = -R_0(\lambda) V \psi_0(x)$. Lemma~\eqref{noout} then establishes the uniqueness. 

Let $\psi_0(x) = -R_0(\lambda) V \psi_0(x) + h$ where $h \in L^{2,-\sigma}$ solves $(H- \lambda) \psi=0$. Then, $ (\sqrt{-\Delta} - \lambda ) h=0$, and by \eqref{psi0} 
\begin{align*}
h =   \frac{c_d  }{ ( \lambda r )^{\frac{d-1}{2} }} ( e^{i \lambda r}  \phi(\theta) + e^{-i \lambda r} i^{1-d}   \phi(-\theta) ) + O( r^{-\frac{d+1}{2}}). 
\end{align*}
for some $\phi$.  Note that we assume $ \psi_0(r \theta)$ to be outgoing. Therefore, $ \phi =0$ resulting in $h=0$. 
\end{proof}

\end{theorem}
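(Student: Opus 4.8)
The plan is to establish existence and uniqueness separately. For \textbf{existence} I would first build the free incoming wave and then correct it with the perturbed resolvent. By \eqref{psi0} the assignment $\phi\mapsto\psi_0$, where $\psi_0(x)=\int_{S^{d-1}}e^{-i\lambda\la x,\omega\ra}\phi(\omega)\,d\omega$, produces a solution of $(\sqrt{-\Delta}-\lambda)\psi_0=0$ whose leading incoming coefficient at infinity is a fixed nonzero constant times $\phi(\theta)$; so I choose $\phi\in C^\infty(S^{d-1})$, determined explicitly by $g$, so that this coefficient equals $g(\theta)$. I then set $\psi:=\psi_0-R_V(\lambda)(V\psi_0)$. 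This is legitimate because $V\psi_0\in L^\infty_c\subset L^{2,\sigma}$ for every $\sigma>1/2$ and, $H$ having no positive eigenvalues, $R_V(\lambda)$ is well defined by Lemma~\ref{solves}; moreover $(\sqrt{-\Delta}-\lambda)\psi_0=0$ gives $(H-\lambda)\psi_0=V\psi_0$, while $(H-\lambda)R_V(\lambda)(V\psi_0)=V\psi_0$ again by Lemma~\ref{solves}, so $(H-\lambda)\psi=0$, and $\psi\in\mathcal{H}_{1,-\sigma}\subset H^1_{\mathrm{loc}}$ by Remark~\ref{rmk} part ii).

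To read off the asymptotics I would use the identity $R_V(\lambda)(V\psi_0)=R_0(\lambda)(V\psi)$, which follows from the resolvent relation $R_V=R_0-R_0VR_V$ (valid at the boundary value $\lambda$ by the limiting absorption of Lemma~\ref{solves} and Remark~\ref{rmk} part ii) and has a compactly supported source since $V\psi\in L^2_c$; thus $\psi=\psi_0-R_0(\lambda)(V\psi)$. Because $R_0(\lambda)$ applied to a compactly supported function is purely outgoing, the incoming part of $\psi$ is exactly that of $\psi_0$, namely $r^{-(d-1)/2}e^{-i\lambda r}g(\theta)$ up to the lower-order remainder; and inserting the explicit kernel \eqref{compactexp} (equivalently \eqref{R0main}) into a stationary-phase estimate at $\lambda>0$, as in \eqref{Vtheta}, yields $R_0(\lambda)(V\psi)(r\theta)=r^{-(d-1)/2}e^{i\lambda r}c_d\lambda^{(d-1)/2}\widehat{V\psi}(\lambda\theta)+O(r^{-(d+1)/2})$. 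Hence $\psi(r\theta)=r^{-(d-1)/2}\big(e^{i\lambda r}f(\theta)+e^{-i\lambda r}g(\theta)\big)+O(r^{-(d+1)/2})$, where $f$ is a fixed multiple of $g(-\theta)$ plus a multiple of $\theta\mapsto\widehat{V\psi}(\lambda\theta)$; the first summand is smooth because $g$ is, and the second is the restriction to $\lambda S^{d-1}$ of the Fourier transform of a compactly supported $L^1$ function, hence real-analytic by Paley--Wiener, so $f\in C^\infty(S^{d-1})$.

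For \textbf{uniqueness} I would argue as indicated after the statement. If $\psi_1,\psi_2$ both satisfy the conclusion with the same $g$, then $w:=\psi_1-\psi_2$ solves $(H-\lambda)w=0$ and is purely outgoing, $w(r\theta)=r^{-(d-1)/2}e^{i\lambda r}(f_1-f_2)(\theta)+O(r^{-(d+1)/2})$. Setting $h:=w+R_0(\lambda)(Vw)$ one has $(\sqrt{-\Delta}-\lambda)h=0$, and by Remark~\ref{rmk2} together with \eqref{psi0} $h=\int_{S^{d-1}}e^{i\lambda\la x,\omega\ra}\phi\,d\omega$ with incoming coefficient a nonzero multiple of $\phi(\theta)$; since $w$ and $R_0(\lambda)(Vw)$ are both outgoing, $h$ is outgoing, forcing $\phi\equiv0$ and hence $h\equiv0$, i.e. $w=-R_0(\lambda)(Vw)$. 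Lemma~\ref{noout} then says $\lambda$ must be an eigenvalue of $H$ unless $w=0$; since $H$ has no positive eigenvalues, $w=0$, so $\psi_1=\psi_2$ and, comparing leading outgoing coefficients, $f_1=f_2$.

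The step I expect to be the main obstacle is purely analytic: upgrading the $O_{L^2}$-type remainders for $R_0(\lambda)$ on compactly supported inputs (as recorded in \eqref{R0exp}) to the pointwise $O(r^{-(d+1)/2})$ decay demanded by the statement, which needs the explicit kernel \eqref{compactexp} together with a careful stationary/non-stationary phase split; and, in the uniqueness part, checking that $h=w+R_0(\lambda)(Vw)$ genuinely lies in the function class in which \eqref{psi0} characterizes free solutions by their leading asymptotics. The conceptual content — that an outgoing solution with no incoming part, in the absence of positive eigenvalues, must vanish — is exactly Lemma~\ref{noout}.
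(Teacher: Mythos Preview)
Your uniqueness argument is essentially the paper's own: set $h:=w+R_0(\lambda)(Vw)$, observe $(\sqrt{-\Delta}-\lambda)h=0$, use \eqref{psi0} together with the fact that both $w$ and $R_0(\lambda)(Vw)$ are outgoing to force $\phi\equiv0$, hence $h\equiv0$, and then invoke Lemma~\ref{noout}. The paper in fact proves \emph{only} uniqueness and omits existence entirely; your existence argument via $\psi:=\psi_0-R_V(\lambda)(V\psi_0)$ and the identity $R_V(\lambda)(V\psi_0)=R_0(\lambda)(V\psi)$ is a correct and natural complement, and your remark that upgrading the $O_{L^2}$ remainder in \eqref{Vtheta} to a pointwise $O(r^{-(d+1)/2})$ is the genuine analytic work is accurate.
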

\section*{Acknowledgement}
The author would like to thank John Schotland for suggesting the  study of the resonances of half Laplace operator.

\end{document}